\newcounter{fig}
\newtheorem{theorem}{Theorem}[section]
\newtheorem{definition}[theorem]{Definition}
\newtheorem{lemma}[theorem]{Lemma}
\newtheorem{proposition}[theorem]{Proposition}
\newtheorem{remark}{Remark}[section]
\numberwithin{equation}{section}
\newcommand{\baa}{\begin{array}}
\newcommand{\eaa}{\end{array}}
\def\R{\mathbb{R}}
\def\epsilon{\varepsilon}
\newcommand{\ld}{\lambda}
\newcommand{\mU}{\mathcal{U}}
\newcommand{\mL}{\mathcal{L}}
\newcommand{\eldo}{e^{\ld_1 z}}
\newcommand{\elds}{e^{\ld_u z}}
\newcommand{\eldt}{e^{\ld_3 z}}
\newcommand{\eldss}{e^{\ld^* z}}
\newcommand\ophi{{\overline{\phi}}}
\newcommand\uphi{{\underline{\phi}}}
\newcommand\ep{\varepsilon}
\newcommand{\bi}{\begin{itemize}}
\newcommand{\ei}{\end{itemize}}
\newcommand{\ben}{\begin{enumerate}}
\newcommand{\een}{\end{enumerate}}
\newcommand\be{\begin{equation}}
\newcommand\ee{\end{equation}}
\newcommand\bea{\begin{eqnarray}}
\newcommand\eea{\end{eqnarray}}
\newcommand\beaa{\begin{eqnarray*}}
\newcommand\eeaa{\end{eqnarray*}}
\newcommand\bss{\begin{cases}}
\newcommand\ess{\end{cases}}
\newcommand\triphi{{(\phi_1,\phi_2,\phi_3)}}
\begin{document}
\title[Forced waves]{Forced waves of a three species predator-prey system with a pair of weak-strong competing preys in a shifting environment}

\author[T. Giletti]{Thomas Giletti}
\address{Institut Elie Cartan de Lorraine, UMR 7502, University of Lorraine, 54506 Vandoeuvre-l\`es-Nancy, France}
\email{thomas.giletti@univ-lorraine.fr}

\author[J.-S. Guo]{Jong-Shenq Guo}
\address{Department of Mathematics, Tamkang University, Tamsui, New Taipei City 251301, Taiwan}
\email{jsguo@mail.tku.edu.tw}

\thanks{Date: \today. Corresponding Author: J.-S. Guo.}
\thanks{This work was partially supported by the Ministry of Science and Technology of Taiwan under the grant 108-2115-M-032-006-MY3, and by the CNRS-NCTS joint International Research Network ReaDiNet.
 We would like to thank the anonymous referees for the careful readings and some valuable comments.}

\thanks{{\em 2000 Mathematics Subject Classification.} Primary: 35K57, 34B40; Secondary: 92D25, 92D40.}

\thanks{{\em Key words and phrases:} predator-prey system, forced wave, shifting speed, upper-lower solution.}

\begin{abstract}
In this paper, we investigate so-called forced wave solutions of a three components reaction-diffusion system from population dynamics.
Our system involves three species that are respectively two competing preys and one predator; moreover, the competition between both preys is strong,
i.e. in the absence of the predator, one prey is driven to extinction and the other survives.
Furthermore, our problem includes a spatio-temporal heterogeneity in a moving variable that typically stands as a model for climate shift.
In this context, forced waves are special stationary solutions which are expected to describe the large-time behavior of solutions,
and in particular to provide criteria on the climate shift speed to allow survival of either of the three species.
We will consider several types of forced waves to deal with various situations depending on which species are indigenous and which species are aboriginal.
\end{abstract}

\maketitle
\setlength{\baselineskip}{18pt}
\section{Introduction}
\setcounter{equation}{0}

In this paper, we study the following three-species predator-prey system with a pair of weak-strong competing preys
\be\label{pde}
\begin{cases}
u_t = d_1u_{xx} + r_1u[1+\alpha(x+st)-u-kv-b_1w],\;x\in\R,\,t>0,\\
v_t = d_2v_{xx} + r_2v[1+\alpha(x+st)-hu-v-b_2w],\;x\in\R,\,t>0,\\
w_t = d_3w_{xx} + r_3w[-1+\alpha(x+st)+a_1u+a_2v-w],\;x\in\R,\,t>0,
\end{cases}
\ee
in which $u,v$ stand for the densities of two competing preys, $w$ is the density of the predator, and all parameters in system~\eqref{pde} are positive.

The function $\alpha$ models the effect of climate change and we assume it to be a negative bounded continuous function such that
\be\label{a1}
 \alpha(z)\ge -Ce^{\rho z},\;\forall\, z\le -K,\; \alpha(z)<-1,\;\forall\, z\geq K,\;\mbox{for some positive constants $C,\rho,K$.}
\ee
The positive constant $s$ is the shifting speed of the environment, and we typically assume it to be positive. Therefore the favorable habitat of all three species is receding (in the left direction) as time advances.
The $d_i$, $i=1,2,3$, denote the diffusion coefficients. The (heterogeneous) functions $[1+\alpha(x+st)]r_i$, $i=1,2$, are the intrinsic growth rates of $u,v$, respectively,
and the net growth rate of the predator $w$ is assumed to be $[-1+\alpha(x+st)]r_3$. In particular, due to $\alpha \leq 0$, it is assumed that the predator cannot survive without the feeding of preys.
Moreover, the two preys obey the logistic growth rule. Regarding coupling terms, $a_i,b_i$, $i=1,2$, are the conversion rates and the predation rates for the preys, respectively,
and $h,k$ are inter-specific competition coefficients {such that $0<h<1<k$. This means that $u$ is a weak competitor and $v$ is a strong competitor, at least in the absence of the predator.}

For simplicity, in this paper we only consider the case with equal diffusions, predation rates and conversion rates, namely,
\beaa
d_1=d_2=d_3:=d,\quad a_1=a_2:=a,\quad b_1=b_2:=b.
\eeaa
Moreover, we always assume that
\be\label{c-s}
{a>1,\quad 0<h<1<k,}
\ee
so that the predator can survive with the feeding of the competing preys.


We are mainly concerned with the existence and non-existence of forced waves, namely, traveling waves which move with the same speed $s >0$ as the environmental shift.
More precisely, a forced wave of~\eqref{pde} is an entire in time and bounded solution of the form
$$(u,v,w)(x,t):=(\phi_1,\phi_2,\phi_3)(z),\quad z:=x+st,$$
for some function $(\phi_1,\phi_2,\phi_3)$ (which we call the {\it wave profiles}).
Hence $(\phi_1,\phi_2,\phi_3)$ satisfies
\be\label{TWS}
\begin{cases}
d\phi_1''(z)-s\phi_1'(z)+r_1\phi_1(z)[\alpha(z)+(1-\phi_1-k\phi_2-b\phi_3)(z)]=0,\\
d\phi_2''(z)-s\phi_2'(z)+r_2\phi_2(z)[\alpha(z)+(1-h\phi_1-\phi_2-b\phi_3)(z)]=0,\\
d\phi_3''(z)-s\phi_3'(z)+r_3\phi_3(z)[\alpha(z)+(-1+a\phi_1+a\phi_2-\phi_3)(z)]=0,
\end{cases}
\ee
for all $z\in\R$.


According to condition~\eqref{a1}, the space-time environmental heterogeneity is such that the favorable region to the three species decreases as time increases.
In particular, in the non-moving frame all species go to extinction. Thus a forced wave must satisfy that
\be\label{rl}
\triphi(\infty)=(0,0,0),
\ee
which is the only non-negative steady state of the limiting problem as $x + st \to +\infty$.
{The condition \eqref{rl} can be verified in the same manner as that of \cite[Proposition 2.2]{cg22}.
It is simply due to the fact that at $+\infty$ the growth rate for both preys is negative, which ultimately drive them as well as the predator to extinction. We refer the reader to \cite{cg22} and omit its proof here.}

On the other hand, there are several possible limiting states at $z=-\infty$.
Since the forced wave moves to the left, the choice of this limiting state may be interpreted as that of the initial condition before the climate change.
Computing the nontrivial constant states of \eqref{TWS} without the $\alpha$ term, we find the possible limiting states
\be\label{non}
E_u:=(1,0,0),\quad E_v:=(0,1,0),\quad E^*:=(u_p,0,w_p),\quad E_*:=(0,v_p,w_p),
\ee
where
\beaa
v_p=u_p := \frac{1+b}{1+ab},\quad w_p := \frac{a-1}{1+ab}.
\eeaa
Due to our assumption that $h < 1 < k$, both competing preys cannot co-exist and thus there is no positive co-existence state for system \eqref{TWS} without $\alpha$ term.
In particular, the above formulae for the steady states do not involve the competition parameters $h$ and $k$.

When the climate change effect is taken into account, there have been a lot of studies on the existence of forced waves for
the scalar equation and two-component competition (or cooperative) systems.
See, e.g., \cite{br08,br09,bf18,bg19,flw16,hz17,lwz18} and \cite{co1,ch20,dsz20,wz19} respectively for scalar local and nonlocal equations, and \cite{ywl19,ww21,dll21} for two species models.
All models in these works satisfy the comparison principle so that the classical monotone iteration method can be applied.
With the shifting effect, little is done for two or three species predator-prey systems where such an approach is not available.

On the other hand, an application of Schauder's fixed point theorem with the help of so-called generalized upper-lower solutions
has been proved to be very successful in many homogeneous predator-prey systems (without the shifting effect) to study traveling wave solutions.
In this aspect, we refer the reader to \cite{M01,WZ01,HLR03,HZ03,LLR06,LLM10,H12,L14,LR14,CGY,ZJ17} for 2-species cases and
to \cite{DX12,HL14,L15,Z17,L18,BP18,GNOW20,CGG21,CG21} for 3-species cases. Recently, this method was used  to derive the existence of forced waves
in the work \cite{choi21} for a two species predator-prey system in a shifting environment and
in \cite{cg22} for system \eqref{pde} with two weak competing preys (i.e. $h,k<1$) and with non-equal diffusion.
Note that in \cite{choi21,cg22} the monotonicity of $\alpha$ is imposed, but the shifting effect on the predator is only indirect.
In this work, we shall remove the assumption on the monotonicity of $\alpha$ imposed in \cite{choi21,cg22}.
Moreover, we impose directly the shifting effect in the predator equation of~\eqref{pde}.




The rest of this paper is organized as follows.
In Section~\ref{sec:main}, we state our main results on the existence and non-existence of forced waves.
We shall deal separately with different types of forced waves depending on the invaded state $(\phi_1, \phi_2, \phi_3) (-\infty)$.
Section~\ref{sec:profiles} is concerned with the proofs of the existence results by the method of generalized upper-lower solutions.
The formulas of generalized upper-lower solutions for forced waves with invaded states $E_u$ and $E^*$ are exactly the same as that in \cite{cg22}.
However, the conditions we impose here are different from that in \cite{cg22} (see Remark~\ref{rk1} below),
to be self-contained and for the reader's convenience we give the detailed construction of upper-lower solutions here.
The verification of these upper-lower solutions and the technical computations this involves shall be postponed to the Appendix.
Lastly, in Section~\ref{sec:profiles} we derive the existence of forced waves with invaded state~$E_*$ for any $s>0$.
In particular, we shall include a very simple proof of the existence of a positive forced wave for scalar equations without any monotonicity assumption on the shifting heterogeneity~$\alpha$.
See Proposition~\ref{scalar} below.

\section{Main results}\label{sec:main}

In this section, we proceed to the statements of our main results on sufficient and necessary conditions for the existence of forced waves. First note that, by a suitable translation, condition~\eqref{a1} can be rephrased as
\be\label{a2}
 \alpha(z)\ge -\ep e^{\rho z},\quad \forall\, z<0,
\ee
for any given positive constant $\ep$.
Indeed, for a given positive constant $\ep$, from \eqref{a1} we have
\beaa
 \theta(z):=\alpha(z-M)\ge -Ce^{-\rho M}e^{\rho z}\ge -\ep e^{\rho z},\quad  \forall\, z<0,
\eeaa
if we choose the constant $M\ge K$ large enough such that $Ce^{-\rho M}\le\ep$.
Replacing $\alpha(z)$ in system \eqref{TWS} by $\theta(z)$, without loss of generality and for convenience,
we shall assume~\eqref{a2} in the rest of this paper for some $\ep >0$ that can be made arbitrarily small as needed.
Note that a solution $\triphi(z)$ of the translated system \eqref{TWS} with $\alpha(z)=\theta(z)$ renders a solution $\triphi(z+M)$ of the original system \eqref{TWS} with $\alpha(z)$.

In the sequel we denote
\beaa
\beta^*:=1-hu_p-bw_p,\qquad \beta_*:=1-kv_p-bw_p.
\eeaa
Then one can check that $\beta^*>0$ and $\beta_*<0$, using $h<1$ and $k>1$, since
\beaa
 \beta^*=(1-h)(1+b)/(1+ab),\quad \beta_*=(1-k)(1+b)/(1+ab).
\eeaa
Moreover, these are the growth rates of respectively $v$ around the steady state $E^*$, and $u$ around the steady state $E_*$, when $\alpha$ is replaced by~$0$.
Hence $E^*$ is unstable and $E_*$ is stable (in the ODE sense and without the $\alpha$ term).
Note that $E_u$ and $E_v$ are always unstable. Next, we also let
\beaa
s_2^*:=2\sqrt{dr_2(1-h)},\quad  s_2^{**}:=2\sqrt{dr_2\beta^*},\quad s_3^*:=2\sqrt{dr_3(a-1)},
\eeaa
which denote respectively the minimal traveling wave speeds in the homogeneous equations
$$
\left.
\begin{array}{l}
v_t = d v_{xx} + r_2 v (1-h -v), \vspace{5pt}\\
v_t = d v_{xx} + r_2 v (\beta^* -v), \vspace{5pt}\\
w_t = d w_{xx} + r_3 w (a-1 -w).
\end{array}
\right.
$$
In other words, at least formally $s_2^*$ is the minimal speed of invasion of the strong prey in a homogeneous environment populated only by its weaker competitor,
while $s_2^{**}$ is the minimal speed of invasion of the strong prey in a homogeneous environment populated by both the weak competitor and the predator.
Similarly, $s_3^*$ may be understood as the minimal invasion speed of the predator in an environment populated by only one of either prey.

{By analogy with the scalar equation, it is natural to expect that the existence or not of forced waves of~\eqref{pde} is related to the value of the shifting speed $s$
and how it compares to the invasion speeds~$s_2^*$, $s_3^*$ and $s_2^{**}$}. This is already confirmed by our first main result which provides necessary conditions for the existence of (positive) forced waves.
\begin{theorem}\label{th:nec}
Under conditions \eqref{a1} and \eqref{c-s}, there exists a positive solution $\triphi$ of \eqref{TWS}
\bea
&&\mbox{\,with $\triphi(-\infty)=(1,0,0)$ only if $s\ge\max\{s_2^*,s_3^*\}$;}\label{eu}\\
&&\mbox{\,with $\triphi(-\infty)=(0,1,0)$ only if $s\ge s_3^*$;}\label{ev}\\
&&\mbox{\,with $\triphi(-\infty)=(u_p,0,w_p)$ only if $s\ge s_2^{**}$.}\label{estar}
\eea
\end{theorem}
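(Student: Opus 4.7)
My approach is to reduce all three necessary conditions to a single scalar ODE obstruction that I will set up and prove first. The lemma I would establish is: \emph{if $\phi\colon \R \to (0,\infty)$ satisfies $d\phi'' - s\phi' + \gamma(z)\phi = 0$ on some left half-line $(-\infty,Z]$ with $\gamma(z) \geq \gamma_0 > 0$ there, then $s \geq 2\sqrt{d\gamma_0}$.} To prove it I would argue by contradiction: assuming $s < 2\sqrt{d\gamma_0}$, I remove the drift with the standard change of unknown $\chi(z) := e^{-sz/(2d)}\phi(z)$. Positivity of $\phi$ turns the equation into the differential inequality
\[
d\chi''(z) + \omega^2 \chi(z) \leq 0, \qquad \omega^2 := \gamma_0 - \frac{s^2}{4d} > 0,
\]
on $(-\infty,Z]$. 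I then run a Sturm-type oscillation argument: test against $\zeta(z) := \sin(\omega(z-z_0)/\sqrt{d})$ on an interval $[z_0, z_0 + \pi\sqrt{d}/\omega] \subset (-\infty,Z]$ (which I can arrange by taking $z_0$ sufficiently negative). Since $\zeta$ vanishes at the endpoints and satisfies $d\zeta'' + \omega^2 \zeta = 0$, two integrations by parts cause the bulk terms to cancel, leaving
\[
\sqrt{d}\,\omega\bigl[\chi(z_0) + \chi(z_0 + \pi\sqrt{d}/\omega)\bigr] \leq 0,
\]
which contradicts the strict positivity of $\chi$.

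With this lemma in hand, the three items follow by applying it to the component $\phi_i$ which vanishes at $-\infty$ and has positive limiting growth rate. For~\eqref{eu}, I would apply it to $\phi_2$: combining $\triphi(-\infty) = (1,0,0)$ with \eqref{a2}, the coefficient $r_2[\alpha(z) + 1 - h\phi_1 - \phi_2 - b\phi_3]$ tends to $r_2(1-h) > 0$ at $-\infty$, so for every $\eta > 0$ it exceeds $r_2(1-h-\eta)$ on some left half-line, and the lemma gives $s \geq 2\sqrt{dr_2(1-h-\eta)}$; letting $\eta \to 0$ yields $s \geq s_2^*$. An analogous application to $\phi_3$, whose coefficient tends to $r_3(a-1) > 0$, gives $s \geq s_3^*$, and together we obtain $s \geq \max\{s_2^*, s_3^*\}$. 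For~\eqref{ev}, only $\phi_3$ is ``invading'' with positive limiting growth rate $r_3(a-1)$, so the lemma delivers $s \geq s_3^*$. For~\eqref{estar}, I would apply the lemma to $\phi_2$: using $\triphi(-\infty) = (u_p, 0, w_p)$ and $\alpha \to 0$, its coefficient tends to $r_2(1 - hu_p - bw_p) = r_2 \beta^*$, and since $\beta^* > 0$ this produces $s \geq s_2^{**}$.

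Finally I would briefly check that no additional restriction is forced by the remaining components. When a component $\phi_i$ also vanishes at $-\infty$ but its limiting growth rate is non-positive (for instance $\phi_1$ at $E_v$, whose coefficient tends to $r_1(1-k) < 0$, or $\phi_1$ and $\phi_3$ at $E^*$, where the identities $u_p + bw_p = 1$ and $au_p - w_p = 1$ force both limits to vanish), the characteristic equation of the linearisation at $-\infty$ has real roots of opposite signs or a degenerate zero root, so positive decaying profiles are compatible with every $s > 0$ and no extra inequality on $s$ arises. The main obstacle in the whole argument is the Sturm integration-by-parts step that cleanly converts positivity into a contradiction; once it is isolated as a reusable lemma, the three cases reduce to identifying which component is invading and computing its limiting growth rate.
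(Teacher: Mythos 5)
Your proof is correct, but it takes a genuinely different route from the paper. The paper omits the proof, deferring to Proposition~4.3 of \cite{cg22}, which it describes as ``estimating the exponential convergence of the forced wave to its limiting steady state'': one shows that the vanishing component decays exponentially as $z\to-\infty$ with a rate that must be a real root of the characteristic polynomial $d\lambda^2-s\lambda+\gamma_0=0$, which forces the discriminant $s^2-4d\gamma_0\ge 0$. You instead package the obstruction as a self-contained Sturm/oscillation lemma: after the drift-killing substitution $\chi=e^{-sz/(2d)}\phi$, strict positivity of $\phi$ turns the equation into the differential inequality $d\chi''+\omega^2\chi\le 0$, and testing against a half-wave of $\sin$ on an interval of length $\pi\sqrt d/\omega$ deep in the left half-line immediately yields a contradiction. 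This is slicker in that it bypasses establishing any a priori exponential decay of the profile; it only needs the value of $\triphi(-\infty)$ to identify the asymptotic coefficient $\gamma_0>0$ (respectively $r_2(1-h)$, $r_3(a-1)$, $r_3(a-1)$ and $r_2\beta^*$ in the cases \eqref{eu}, \eqref{eu}, \eqref{ev}, \eqref{estar}), whereas the decay-rate approach first converts ``$\phi_i\to 0$'' into ``$\phi_i$ decays like $e^{\lambda z}$'' before reading off the constraint on $s$. The applications to each case and the limiting argument in $\eta$ are correctly carried out. One small imprecision in the closing paragraph: at $E^*$ the components $\phi_1$ and $\phi_3$ do \emph{not} vanish at $-\infty$ (they tend to the positive values $u_p$ and $w_p$), so your lemma is simply inapplicable to them; it is the reaction coefficients $1-u_p-bw_p$ and $-1+au_p-w_p$ that vanish, and your conclusion that they impose no further restriction on $s$ is of course still the right one.
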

{Since the proof of Theorem~\ref{th:nec} is exactly the same as that of \cite[Proposition 4.3]{cg22}, we omit it here. It relies on estimating the exponential convergence of the forced wave to its limiting steady state.
See also \cite[Proposition 4.9]{cg22}.
Hereafter $\triphi$ is positive means $\phi_i>0$ in $\R$ for all $i=1,2,3$.}
We remark that condition \eqref{rl} is not needed in Theorem~\ref{th:nec}, since \eqref{rl} is automatically satisfied for any positive solution of \eqref{TWS}.

In the first type of forced wave in Theorem~\ref{th:nec}, the invaded state is $(1,0,0)$.
Thus in these forced waves, a pulse of the second (strong competitor prey) and third (predator) species propagates together with the climate shift, into an environment populated by the first (weak competitor prey) species.
Similarly, in the second type of forced wave a pulse of the weak prey and of the predator is propagating into an environment inhabited by the strong prey;
in the third type, a pulse of the strong prey propagates into an environment where the weak prey and the predator cohabit.
Note that we omit in Theorem~\ref{th:nec} the case of a forced wave whose invaded state is $(0,v_p,w_p)$.
Indeed, $(0,v_p,w_p)$ is a stable steady state with respect to the underlying kinetic ODE system (that is, \eqref{pde} with $d_1 = d_2 = d_3 = 0$ and $\alpha \equiv 0$),
and therefore we expect that such a forced wave may exist for any positive speed; see Theorem~\ref{th:es} below.\medskip

Next, we seek conditions for the existence of these forced waves. We manage to show that, at least in some parameter ranges, the critical speeds in Theorem~\ref{th:nec} are optimal.
Our next three theorems each deal with a different type of traveling wave.

Let us first consider forced waves whose invaded steady state at $z = -\infty$ is $E_u$. We let
\beaa
Q_1(\rho):=
\left\{
\begin{array}{ll}
d\rho+r_3(a-1)/\rho, & \text{if }  \rho\in(0,\lambda_u),\\
s_3^*, & \text{if } \rho\ge \lambda_u:=\sqrt{r_3(a-1)/d}.
\end{array}
\right.
\eeaa
where~$\rho$ comes from~\eqref{a2}. Then we have the following result:
\begin{theorem}\label{th:eu}
Let conditions \eqref{a1} and \eqref{c-s} be enforced.
Suppose that $r_2(1-h)=r_3(a-1)$, $s\ge Q_1(\rho)$ and
\be\label{r1}
r_1[k+b(2a-1)-1]<r_3(a-1).
\ee
Then there exists a positive solution of \eqref{TWS} satisfying \eqref{rl} and
\be\label{llu}
\triphi( -\infty )=(1,0,0),
\ee
if $s\ge\max\{s_2^*,s_3^*\}=s_2^*=s_3^*$.
\end{theorem}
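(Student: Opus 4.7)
The plan is to follow the method of generalized upper-lower solutions combined with Schauder's fixed point theorem, as in \cite{cg22} and the other references cited for homogeneous predator-prey systems. First I would recast \eqref{TWS} as a fixed point problem: rewrite each equation as
$$d\phi_i''(z) - s\phi_i'(z) - \beta \phi_i(z) = -\beta \phi_i(z) - r_i \phi_i(z)[\alpha(z) + (\cdots)(z)]$$
for $\beta > 0$ large enough to make the right-hand side monotone in the triple $(\phi_1,\phi_2,\phi_3)$ in an appropriate sense, then invert the constant-coefficient linear operator on the left and apply Schauder's theorem on the convex set of continuous triples sandwiched between a pair of upper-lower solutions. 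Continuity and compactness of the resulting operator in the space of bounded continuous functions equipped with a weighted sup norm are standard; the difficulty therefore lies entirely in the construction of a suitable pair of upper-lower solutions.

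For the invaded state $E_u$ I would adopt the ansatz borrowed from \cite{cg22}:
$$\bar{\phi}_1 \equiv 1, \quad \underline{\phi}_1(z) = \max\{0,\, 1 - M_1 e^{\nu z}\},$$
$$\bar{\phi}_j(z) = \min\{K_j,\, A_j e^{\lambda z}\}, \quad \underline{\phi}_j(z) = \max\{0,\, A_j e^{\lambda z}(1 - B_j e^{\mu z})\}, \quad j=2,3,$$
with positive constants $K_j, A_j, B_j, M_1$ and positive exponents $\lambda,\mu,\nu$ to be tuned. The assumption $r_2(1-h) = r_3(a-1)$, combined with $s \geq s_2^* = s_3^*$, allows a common leading exponent $\lambda$ for $\phi_2$ and $\phi_3$ satisfying $d\lambda^2 - s\lambda + r_2(1-h) = d\lambda^2 - s\lambda + r_3(a-1) \leq 0$. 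This common scale is crucial because $\phi_2$ and $\phi_3$ appear as coupling terms in each other's equations and must be balanced on the same exponential envelope at $-\infty$.

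The verification of the upper-lower solution inequalities is the main technical obstacle. On the pieces where a $\min$ or $\max$ with $0$, $1$, or $K_j$ is active, the inequalities reduce to simple comparisons with the constant steady states. On the smooth exponential pieces, one must match growth rates carefully, and two assumptions of the theorem play a decisive role. First, the inequality $s \geq Q_1(\rho)$ amounts to $d\rho^2 - s\rho + r_3(a-1) \leq 0$ when $\rho < \lambda_u$, and to $s \geq s_3^*$ otherwise; this guarantees that the $\alpha$-perturbation, dominated by $e^{\rho z}$ through \eqref{a2}, can be absorbed into the correction terms $B_j e^{\mu z}$ of $\underline{\phi}_j$ without spoiling the exponential balance at $-\infty$. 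Second, the smallness condition \eqref{r1}, namely $r_1[k + b(2a-1) - 1] < r_3(a-1)$, controls the negative feedback on $\phi_1$ coming from the $-k\phi_2 - b\phi_3$ terms, by ensuring that the effective dip of $\phi_1$ below $1$ decays more slowly than $\phi_2$ and $\phi_3$ themselves. This permits the choice of an admissible exponent $\nu < \lambda$ in $\underline{\phi}_1$. Standard KPP-type computations then allow $\mu$ to be chosen small enough and the prefactors $A_j, B_j, M_1, K_j$ to be ordered appropriately so that all inequalities hold. Since the formulas coincide with those of \cite{cg22}, I would relegate the detailed computations to an appendix, as the authors announce.

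The last step is to extract a solution via Schauder's theorem and check its qualitative properties. Positivity of each $\phi_i$ on $\mathbb{R}$ follows from the strong maximum principle applied to each scalar equation with coefficients frozen at the fixed point. The limit $\triphi(-\infty) = (1,0,0)$ is forced by the sandwich, since $\underline{\phi}_1 \to 1$ while $\bar{\phi}_1 \equiv 1$ at $-\infty$, and $\bar{\phi}_j = A_j e^{\lambda z} \to 0$. The limit $\triphi(+\infty) = (0,0,0)$ is the content of \eqref{rl}, which, as recalled in the introduction, is established exactly as in \cite[Proposition 2.2]{cg22}.
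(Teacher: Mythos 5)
Your overall strategy (Schauder fixed point via generalized upper-lower solutions, exponential ansatz built around the common root of $d\lambda^2 - s\lambda + r_2(1-h) = d\lambda^2 - s\lambda + r_3(a-1)$, balance condition from \eqref{r1}, heterogeneity absorbed via \eqref{a2} thanks to $s \ge Q_1(\rho)$) is the right one and matches the paper's approach for the supercritical range.

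However, there is a genuine gap: your construction only covers $s > s_2^* = s_3^*$, not the boundary case $s = s_2^* = s_3^*$ which the theorem also claims. Your lower solutions take the form $\underline{\phi}_j(z) = \max\{0,\, A_j e^{\lambda z}(1 - B_j e^{\mu z})\}$, which requires choosing the correction exponent $\mu$ strictly between the two real roots $\lambda_1 < \lambda_2$ of $A_1(\lambda) := d\lambda^2 - s\lambda + r_3(a-1) = 0$ (one needs $A_1(\mu) < 0$ to make the correction term produce a favorable sign and dominate the quadratic error terms). When $s = s_3^*$ the two roots coincide at $\lambda_u = \sqrt{r_3(a-1)/d}$, so that interval collapses and there is no admissible $\mu$. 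This is not a cosmetic issue: a purely exponential envelope $e^{\lambda_u z}$ cannot serve as a lower solution at the critical speed, for the same reason critical KPP waves are not of pure exponential type. The paper deals with this by a separate construction (Lemma~\ref{la:eue}) that uses $-B_0 z\, e^{\lambda_u z}$ for the upper envelopes of $\phi_2,\phi_3$, $1 + B_0 z\, e^{\lambda_u z}$ for $\underline{\phi}_1$, and a fractional-power correction $\sqrt{|z|}\, e^{\lambda_u z}$ in place of the $e^{\mu z}$ correction in $\underline{\phi}_2, \underline{\phi}_3$. Without this (or an equivalent device, e.g.\ passing to the limit $s \searrow s_3^*$ with uniform control, which would need a separate compactness argument), your proof establishes Theorem~\ref{th:eu} only for strictly supercritical $s$.

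Two lesser points. First, your reading of \eqref{r1} as ``permitting $\nu < \lambda$ in $\underline{\phi}_1$'' does not match how the condition is actually used: the paper takes $\nu = \lambda_1$, so that $d\lambda_1^2 - s\lambda_1 = -r_3(a-1)$ contributes the positive quantity $r_3(a-1)$ at scale $e^{\lambda_1 z}$, and \eqref{r1} (with $\varepsilon$ small via \eqref{ep1}) precisely ensures this dominates the negative feedback coefficient $r_1[\varepsilon + k + b(2a-1) - 1]$ at the same scale; it is a coefficient balance, not a statement about decay rates or about choosing a slower exponent. Second, you should make explicit that $s \ge Q_1(\rho)$ is what guarantees $\rho \ge \lambda_1$, so that after translation \eqref{a2} can be taken with $\rho = \lambda_1$ (or $\rho = \lambda_u$ in the critical case); your phrasing about the $\alpha$-perturbation ``being absorbed into $B_j e^{\mu z}$'' is accurate only for the $\phi_2, \phi_3$ inequalities, whereas in $\mathcal{L}_1$ it is absorbed directly into the leading $e^{\lambda_1 z}$ term.
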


Such a forced wave corresponds to a situation where a ``pulse'' of the strong prey and the predator travel along the climate change in an environment which is initially populated only by the weak prey.
By an analogy with the homogeneous case, it is expected that such a forced wave can only be observed in the Cauchy problem if the initial data has a slow enough (typically exponential) decay as $x \to -\infty$.
If the initial populations of the strong prey $v_0$ and the predator $w_0$ are compactly supported, it is therefore expected that both species are driven to extinction.
In other words, this result formally suggests that $\max \{ s_2^* ,s_3^* \}$ is the maximal bearable climate shift speed in this situation.

\begin{remark}\label{rk1}
{It is left open whether there are forced waves invading $E_u$ when $s_2^*\neq s_3^*$. We conjecture that $\max \{s_2^*, s_3^*\}$ should be the minimal speed for forced waves.}

Also, for the forced waves invading $E_u$ for two weak competing preys in \cite{cg22}, we imposed the condition $0<b<\min\{1-h,1-k\}/(2a)$
which is void for the case of weak-strong competing preys here.
However, the same formula of upper-lower solutions constructed in~\cite{cg22} works for Theorem~\ref{th:eu} with condition \eqref{r1}.
\end{remark}


{Let us now turn to the situation where the invaded state is $E^*$, i.e. the weak prey~$u$ cohabits with the predator~$w$ ahead of the climate shift, and the strong prey~$v$ is the tentative invader.
In this case, we} let
\beaa
Q_2(\rho):=
\left\{
\begin{array}{ll}
d\rho+r_2\beta^*/\rho, & \text{if } \rho\in(0,\lambda^*),\\
s_2^{**}, & \text{if } \rho\ge \lambda^*:=\sqrt{r_2\beta^*/d}.
\end{array}
\right.
\eeaa
Then we have:
\begin{theorem}\label{th:estar}
Let conditions \eqref{a1} and \eqref{c-s} be enforced.
Suppose that $s\ge Q_2(\rho)$ and
\be\label{r12}
\max\{r_1[(k-1)+b(2a-1)],r_3\}<r_2\beta^*.
\ee
Then there exists a positive solution of \eqref{TWS} satisfying \eqref{rl} and
\be\label{llp}
\triphi( -\infty)=(u_p,0,w_p),
\ee
if $s\ge s_2^{**}$.
\end{theorem}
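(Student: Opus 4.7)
The plan is to apply the method of generalized upper-lower solutions together with Schauder's fixed point theorem, in the same framework as in \cite{cg22,choi21}. First I would truncate the line to a bounded interval $[-n,n]$, set up a fixed-point formulation for \eqref{TWS} on this interval with Dirichlet-type data chosen from the upper/lower barriers, and apply Schauder's theorem to obtain a solution $\triphi^n$ pinched between these barriers. Standard elliptic estimates together with a diagonal extraction then yield, as $n\to\infty$, an entire solution $\triphi$ of \eqref{TWS}. The limits \eqref{rl} and \eqref{llp} follow from the sandwich at $\pm\infty$, and strict positivity of each $\phi_i$ from a componentwise application of the strong maximum principle.

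The heart of the argument is the explicit construction of $\ophi_i$ and $\uphi_i$ with the prescribed asymptotics. Linearizing the $\phi_2$-equation around $E^*=(u_p,0,w_p)$ gives $d\phi_2''-s\phi_2'+r_2\beta^*\phi_2=0$, whose characteristic equation $d\lambda^2-s\lambda+r_2\beta^*=0$ has positive real roots exactly when $s\ge s_2^{**}$; let $\lambda=\lambda(s)$ denote the smaller one. Following \cite{cg22}, I would take a Fisher-type ansatz for the invader,
\[
\ophi_2(z)=\min\{e^{\lambda z},\,C_2\},\qquad \uphi_2(z)=\max\{e^{\lambda z}(1-Me^{\eta z}),\,0\},
\]
with $M\gg 1$ and $\eta>0$ small. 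For $\phi_1$ and $\phi_3$, which converge at $-\infty$ to the positive values $u_p$ and $w_p$ and to $0$ at $+\infty$, I would bracket them between envelopes of the form $u_p\pm\sigma_1 e^{\mu z}$ and $w_p\pm\sigma_3 e^{\mu z}$ near $-\infty$, suitably truncated and patched to Fisher-type bumps near $+\infty$, with $\mu>0$ chosen small enough that $e^{\mu z}$ dominates both the forcing $\alpha$ (via \eqref{a2}) and the cross-perturbation produced by the nonzero $\phi_2$.

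The spectral hypothesis $s\ge Q_2(\rho)$ encodes the compatibility between the prescribed decay rate $\rho$ of $\alpha$ and the characteristic exponent $\lambda$: for $\rho<\lambda^*$ the condition $s\ge d\rho+r_2\beta^*/\rho$ is equivalent to $\lambda\le\rho$, so that $\alpha(z)\,\ophi_2(z)=O(e^{(\lambda+\rho)z})$ decays strictly faster than $\ophi_2$ itself and can be absorbed in the decay margin of the $\phi_2$-equation. The two parts of the algebraic bound \eqref{r12}, namely $r_1[(k-1)+b(2a-1)]<r_2\beta^*$ and $r_3<r_2\beta^*$, are used in the $\phi_1$- and $\phi_3$-inequalities to dominate the cross-terms $r_1\phi_1[k\phi_2+b\phi_3]$ and $r_3\phi_3[a\phi_2-\phi_3]$ by the margin $\beta^*$ provided by the invader's equation. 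The principal obstacle — and where the bulk of the work lies — is the systematic verification of the differential inequalities, especially at the corner points of the $\min/\max$ constructions, where they must be interpreted in the generalized sense; this is case-by-case but reduces to a finite list of essentially exponential inequalities, each of which is handled by one of the two margins above. Once these inequalities are checked, the rest of the proof is routine.
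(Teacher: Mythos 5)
Your overall framework (generalized upper-lower solutions, Schauder fixed point theorem, sandwich to read off the limits, strong maximum principle for positivity) matches the paper's Section~\ref{sec:profiles}, and your choice for the $\phi_2$-barriers is essentially the paper's \eqref{up-lo-s1}. However, your prescription for the $\phi_1$- and $\phi_3$-envelopes contains a genuine gap. You propose $u_p\pm\sigma_1 e^{\mu z}$ and $w_p\pm\sigma_3 e^{\mu z}$ with $\mu>0$ ``chosen small enough that $e^{\mu z}$ dominates $\alpha$ and the cross-perturbation from $\phi_2$.'' That means $\mu<\lambda:=\lambda_3$, and this breaks the upper-solution inequality for $\phi_2$. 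Indeed, with $\underline{\phi}_1=u_p-\sigma_1' e^{\mu z}$, $\underline{\phi}_3=w_p-\sigma_3' e^{\mu z}$ and $\overline{\phi}_2=e^{\lambda z}$, one gets
\begin{equation*}
\mathcal{U}_2(z)\ \leq\ e^{\lambda z}\,\bigl[(d\lambda^2-s\lambda+r_2\beta^*)\ +\ r_2(h\sigma_1'+b\sigma_3')\,e^{\mu z}\ -\ r_2 e^{\lambda z}\bigr]
= r_2\, e^{\lambda z}\bigl[(h\sigma_1'+b\sigma_3')\,e^{\mu z}-e^{\lambda z}\bigr],
\end{equation*}
and when $\mu<\lambda$ the bracket is eventually positive as $z\to-\infty$ because $e^{(\mu-\lambda)z}\to\infty$; hence $\mathcal{U}_2\not\leq 0$. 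The paper avoids this by using the \emph{same} decay exponent $\lambda_3$ for all six barrier functions, together with carefully chosen amplitudes $bw_p$, $u_p$, $B_1=2a-1-w_p$, $w_p$ so that the dominant $e^{\lambda_3 z}$ contributions cancel exactly in $\mathcal{U}_1$ (the $-bw_pe^{\lambda_3 z}+bw_pe^{\lambda_3 z}$ cancellation) and produce the sign $-r_2\beta^*e^{2\lambda_3 z}\le 0$ in $\mathcal{U}_2$. This ``matched-rate with coefficient cancellation'' mechanism is the key point; a scale-separation argument with a smaller exponent does not close.

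A second, separate gap: you only discuss the regime $s>s_2^{**}$ (simple roots of the characteristic polynomial $A_2$). The theorem asserts existence for all $s\ge s_2^{**}$, and the critical case $s=s_2^{**}$ requires a different barrier, since $A_2$ has a double root $\lambda^*$ and a pure exponential no longer provides the needed sign margin. The paper handles this separately in Lemma~\ref{la:estar2} with barriers of the form $-B_2 z e^{\lambda^* z}$ and $-B_2 z e^{\lambda^* z}-\eta_2\sqrt{|z|}\,e^{\lambda^* z}$; without such a polynomial-times-exponential correction the verification at $s=s_2^{**}$ does not go through.
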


{Similarly as before, this highlights the value $s_2^{**}$ as the typical minimal speed for a positive forced wave into an environment populated only by the weak prey and the predator,
 (at least formally) as also the maximal climate change speed that is sustainable for an indigenous strong prey competitor to persist in such a situation.}

Lastly, for the stable state $E_*$, we have

\begin{theorem}\label{th:es}
Suppose that
\be\label{co-b22}
a > \frac{1}{1-h}, \quad b < \frac{1-h - 1/a}{2a-1}.
\ee
Then there is a solution $\triphi$ of \eqref{TWS} with $\phi_i>0$ in $\R$, $i=2,3$, such that
\beaa
\triphi(-\infty)=(0,v_p,w_p),\; \triphi(\infty)=(0,0,0),
\eeaa
for any $s>0$.
\end{theorem}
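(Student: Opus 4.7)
Proof plan. A useful first observation is that $\phi_1 \equiv 0$ identically solves the first equation of \eqref{TWS} regardless of $\phi_2,\phi_3$, so the task reduces to producing a positive pair $(\phi_2,\phi_3)$ satisfying the two-species predator-prey subsystem
\begin{align*}
 d\phi_2''-s\phi_2'+r_2\phi_2[\alpha(z)+1-\phi_2-b\phi_3]&=0,\\
 d\phi_3''-s\phi_3'+r_3\phi_3[\alpha(z)-1+a\phi_2-\phi_3]&=0,
\end{align*}
with $(\phi_2,\phi_3)(-\infty)=(v_p,w_p)$ and $(\phi_2,\phi_3)(+\infty)=(0,0)$. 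Since $E_*$ is stable in the underlying ODE (indeed $\beta_*<0$ as $k>1$), no threshold speed is expected here, in agreement with the claim that any $s>0$ works.

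I would proceed by the generalized upper-lower solution method used elsewhere in the paper, in the spirit of \cite{cg22,choi21}. Upper solutions are the easy part: let $\phi_2^+$ be the positive scalar forced wave supplied by Proposition~\ref{scalar} for $d\phi''-s\phi'+r_2\phi[\alpha(z)+1-\phi]=0$, which automatically gives an upper solution for the second equation since the omitted cross-term $-b\phi_2^+\phi_3$ is nonpositive. Analogously, take $\phi_3^+$ to be the positive scalar forced wave of $d\phi''-s\phi'+r_3\phi[\alpha(z)+(a-1)-\phi]=0$, which dominates the third equation as long as $\phi_2^+\le 1$ (this last bound following from the maximum principle for the previous scalar problem). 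Both scalar waves decay to $0$ at $+\infty$ and approach $1$, respectively $a-1$, at $-\infty$, thereby dominating $v_p$ and $w_p$.

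The delicate construction is that of the lower solutions $(\phi_2^-,\phi_3^-)$: they should approach $(v_p,w_p)$ at $-\infty$, stay positive while decaying at $+\infty$, and satisfy the relevant differential inequalities in the presence of sign-indefinite cross-couplings. A natural ansatz is to glue (via a $\max$ with a small decaying exponential $\eta_i e^{-\mu z}$, or with $0$) a perturbation $v_p-Me^{\rho z}$ (respectively $w_p-Me^{\rho z}$) on a left half-line, enabled by the smallness of $\alpha$ granted by~\eqref{a2}, to an exponential tail on the right, with the matching point and the parameters $M,\mu,\eta_i$ tuned so that the corner inequality of the generalized framework is met. With the upper-lower quadruple in hand, I would apply Schauder's fixed point theorem on the truncation $[-n,n]$ with suitable boundary matching, pass to the limit $n\to\infty$ via interior Schauder estimates, and squeeze to obtain a classical positive solution $(\phi_2,\phi_3)$ with the correct asymptotics at $\pm\infty$; adjoining $\phi_1\equiv 0$ then yields the forced wave.

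The main obstacle I anticipate is the simultaneous verification of the $\phi_2^-$ and $\phi_3^-$ inequalities near $-\infty$: because $\phi_3^+$ can reach values close to $a-1>w_p$, the coupling term $-b\phi_3^+$ in the second equation is large enough that the naive choice $\phi_2^-\approx v_p$ acts as a super-solution rather than a sub-solution, forcing one to dip strictly below $v_p$ with compensating derivative terms and matching across the gluing point. Condition~\eqref{co-b22} is precisely what renders this tuning feasible: the inequalities $a(1-h)>1$ and $b(2a-1)<1-h-1/a$ provide the algebraic margin necessary for the coupling parameters and the ansatz exponents to fit together within the three-species framework in which the construction is embedded, which also explains the (seemingly extraneous) appearance of $h$ despite the reduction $\phi_1\equiv 0$.
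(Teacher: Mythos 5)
There is a genuine gap in the proposed construction of the lower solutions, and the claim that condition~\eqref{co-b22} ``renders this tuning feasible'' does not hold up. If you take $\phi_3^+$ converging to $a-1$ at $-\infty$ (your upper bound) and try to make $\phi_2^-$ converge to $v_p$, then the relevant term in the $\phi_2$-inequality~\eqref{l2} tends to
$$r_2\,\phi_2^-\bigl[1+\alpha-\phi_2^--b\phi_3^+\bigr]\;\longrightarrow\; r_2\, v_p\bigl[1-v_p-b(a-1)\bigr]= r_2\, v_p\cdot b\bigl[w_p-(a-1)\bigr]= -\,\frac{r_2\, v_p\, a b^2(a-1)}{1+ab}<0,$$
a strictly negative constant, independently of~\eqref{co-b22}. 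Since the derivative terms of $\phi_2^-=v_p-Me^{\rho z}$ decay to zero, the inequality $\mathcal{L}_2\geq 0$ must fail for $z$ sufficiently negative. The algebraic margin granted by~\eqref{co-b22} is used for a different purpose (see below) and cannot absorb this $O(1)$ discrepancy; the failure is structural, not a matter of tuning $M$, $\mu$, $\eta_i$.

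The paper circumvents exactly this obstruction by a two-step argument that your proposal is missing. Upper solutions are taken to be the crude constants $\overline{\phi}_1=1$, $\overline{\phi}_2=1$, $\overline{\phi}_3=2a-1$; the lower solutions $\underline{\phi}_2,\underline{\phi}_3$ are built via Proposition~\ref{scalar}, but they are required to converge at $-\infty$ to the smaller constants $\gamma_2:=1-h-b(2a-1)$ and $\gamma_3:=-1+a\gamma_2$, not to $v_p$ and $w_p$. This is where~\eqref{co-b22} actually enters: it guarantees $\gamma_2>1/a>0$ and hence $\gamma_3>0$, so that Proposition~\ref{scalar} applies. With these ordered upper-lower solutions Schauder's fixed point theorem (Proposition~\ref{exist}) provides a solution trapped in the box, which a priori only satisfies $\gamma_2\leq\liminf\phi_2$ and $\gamma_3\leq\liminf\phi_3$ at $-\infty$. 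The convergence $(\phi_1,\phi_2,\phi_3)(-\infty)=(0,v_p,w_p)$ is then recovered in a separate step, by the method of contracting rectangles (as in \cite[Section 4.3.1]{CGG21}), exploiting the stability of $E_*$. Your outline has no analogue of this last step; without it, even a successful Schauder argument with weaker lower bounds would not pin down the limit at $-\infty$, and with the lower bounds you propose the Schauder step itself cannot be completed. (A secondary, lesser issue: setting $\phi_1\equiv 0$ is permissible for this theorem since positivity of $\phi_1$ is not claimed, but the paper deliberately keeps $\phi_1$ in the fixed point scheme with $\underline{\phi}_1=0$, $\overline{\phi}_1=1$, which is what makes the subsequent discussion of when $\phi_1\equiv 0$ meaningful.)
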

This shows that, if the strong competitor prey and the predator inhabits the whole (unbounded) favorable zone, they naturally always persist ahead of the climate shift, regardless of the introduction of the weak competitor prey.

Notice that in Theorem~\ref{th:es} we do not state that the first component $\phi_1$ is positive.
Actually this cannot be ensured in general and there is a parameter range where there does not exist a forced wave satisfying $(\phi_1 , \phi_2, \phi_3) (-\infty)  = (0,v_p,w_p)$ together with $\phi_1 >0$.
We shall discuss this in Section~\ref{sec:es} after the proof of Theorem~\ref{th:es}.

In Theorems 2.2 and 2.3, the invaded state is unstable which is why those forced waves had a positive minimal speed; as far as spreading is concerned,
these minimal speeds should be the maximal climate shift speed allowing the corresponding indigenous species to survive. Theorem 2.4 is different because the invaded state is stable;
such a forced wave represent the typical large time behavior of solutions of the Cauchy problem.
In particular, the fact that $\phi_1$ may be null suggests that the weak competing prey may be fully driven to extinction,
even in the moving frame of the climate shift where the populations of the strong prey and the predator are diminished.
\begin{remark}\label{rk2}
{We were unable to derive any nontrivial forced waves invading $E_v$. As a matter of fact, similarly to the case when the invaded state is $E_*$,
one may find a parameter range where a positive forced wave invading $E_v$ simply does not exist. See also Remark~\ref{rk2bis} below.
This indicates that a suitable pair of generalized upper-lower solutions is not always available and its construction is by no means trivial in general.}
\end{remark}


\section{Derivation of wave profiles}\label{sec:profiles}

This section is devoted to the existence of wave profiles by an application of Schauder's fixed point theorem.
To do so, we first introduce the notion of generalized upper-lower solutions as follows.

\begin{definition}
Given $s>0$.
{Continuous functions} $(\ophi_1,\ophi_2,\ophi_3)$ and $(\uphi_1,\uphi_2,\uphi_3)$ defined on~$\R$ are called a pair of generalized upper-lower solutions of \eqref{TWS}
if $\ophi_i''$, $\uphi_i''$, $\ophi_i'$, $\uphi_i'$, $i=1,2,3$, are bounded functions such that the following inequalities hold:
\bea
&& \mathcal{U}_1(z):=d\ophi_1''(z) -s\ophi_1'(z)+r_1\ophi_1(z)[1+\alpha(z)-\ophi_1(z)-k\uphi_2(z)-b\uphi_3(z)]\le 0,\label{u1}\\
&& \mathcal{U}_2(z):=d\ophi_2''(z) -s\ophi_2'(z)+r_2\ophi_2(z)[1+\alpha(z)-h\uphi_1(z)-\ophi_2(z)-b\uphi_3(z)]\le 0,\label{u2}\\
&& \mathcal{U}_3(z):=d\ophi_3''(z) -s\ophi_3'(z)+r_3\ophi_3(z)[-1+\alpha(z)+a\ophi_1(z)+a\ophi_2(z)-\ophi_3(z)]\le 0,\label{u3}\\
&& \mathcal{L}_1(z):=d\uphi_1''(z) -s\uphi_1'(z)+r_1\uphi_1(z)[1+\alpha(z)-\uphi_1(z)-k\ophi_2(z)-b\ophi_3(z)]\ge 0,\label{l1}\\
&& \mathcal{L}_2(z):=d\uphi_2''(z) -s\uphi_2'(z)+r_2\uphi_2(z)[1+\alpha(z)-h\ophi_1(z)-\uphi_2(z)-b\ophi_3(z)]\ge 0,\label{l2}\\
&& \mathcal{L}_3(z):=d\uphi_3''(z) -s\uphi_3'(z)+r_3\uphi_3(z)[-1+\alpha(z)+a\uphi_1(z)+a\uphi_2(z)-\uphi_3(z)]\ge 0,\label{l3}
\eea
for $z\in\mathbb{R}\setminus E$ with some finite set $E$.
\end{definition}

From such generalized upper-lower solutions, one can infer the existence of a solution $(\phi_1,\phi_2,\phi_3)$ to system~\eqref{TWS}, as stated in the following proposition.
\begin{proposition}\label{exist}
Given $s>0$. Suppose that system \eqref{TWS} has a pair of generalized upper-lower solutions $(\ophi_1,\ophi_2,\ophi_3)$ and $(\uphi_1,\uphi_2,\uphi_3)$ such that
\bea
&&\uphi_i(z)\le\ophi_i(z), \quad \forall\,z\in\R, \,  i=1,2,3,\label{order}\\
&& \lim_{\xi\searrow z}\ophi_i'(\xi)\le\lim_{\xi\nearrow z}\ophi_i'(\xi),\; \lim_{\xi\nearrow z}\uphi_i'(\xi)\le\lim_{\xi\searrow z}\uphi_i'(\xi), \quad \forall\,z\in E, \, i=1,2,3.\label{1st}
\eea
Then system \eqref{TWS} has a solution $(\phi_1,\phi_2,\phi_3)$ such that $\uphi_i\le\phi_i\le\ophi_i$, $i=1,2,3$.
\end{proposition}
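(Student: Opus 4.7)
The plan is to apply Schauder's fixed point theorem to an integral operator built from the Green function of an auxiliary linear second-order operator. Fix a constant $\gamma > 0$ large enough that, on the bounded order interval determined by the upper-lower solutions, the function
\[
H_1(z,\phi_1,\phi_2,\phi_3) := \gamma \phi_1 + r_1 \phi_1 \bigl[1+\alpha(z) - \phi_1 - k\phi_2 - b\phi_3\bigr]
\]
is nondecreasing in $\phi_1$, and analogously for $H_2, H_3$ obtained from the other two equations of \eqref{TWS}. The competitive/predator--prey structure of the system automatically makes $H_1$ decreasing in $\phi_2,\phi_3$, $H_2$ decreasing in $\phi_1,\phi_3$, and $H_3$ increasing in $\phi_1,\phi_2$ on the relevant range. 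Letting $\lambda_\pm := (s \pm \sqrt{s^2+4d\gamma})/(2d)$, so $\lambda_- < 0 < \lambda_+$, define
\[
T_i(\phi)(z) := \frac{1}{d(\lambda_+-\lambda_-)} \left[ \int_{-\infty}^z e^{\lambda_-(z-\xi)} H_i(\xi,\phi(\xi))\, d\xi + \int_z^{\infty} e^{\lambda_+(z-\xi)} H_i(\xi,\phi(\xi))\, d\xi \right].
\]
By construction, $T_i(\phi)$ is the unique bounded classical solution of $d\psi'' - s\psi' - \gamma\psi = -H_i(\cdot,\phi)$, and any fixed point of $T := (T_1,T_2,T_3)$ solves \eqref{TWS}.

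Set $\Gamma := \{ \phi \in C(\R;\R^3) : \uphi_i \le \phi_i \le \ophi_i,\ i=1,2,3 \}$, equipped with the topology of locally uniform convergence. Then $\Gamma$ is convex and closed, and $T$ is well-defined on $\Gamma$. The central step is the invariance $T(\Gamma) \subset \Gamma$. The monotonicity of $H_i$ together with the positivity of the kernel in $T_i$ imply that, for $\phi \in \Gamma$, each $T_i(\phi)$ is sandwiched between the values of $T_i$ at appropriate combinations of $(\ophi_1,\ophi_2,\ophi_3)$ and $(\uphi_1,\uphi_2,\uphi_3)$; it therefore suffices to verify, for example, that $T_1(\ophi_1,\uphi_2,\uphi_3) \le \ophi_1$. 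Setting $V := \ophi_1 - T_1(\ophi_1,\uphi_2,\uphi_3)$, inequality \eqref{u1} rewrites exactly as $dV'' - sV' - \gamma V \le 0$ on $\R \setminus E$, with $V$ bounded. A negative interior infimum of $V$ in $\R \setminus E$ is ruled out by the standard maximum principle for the operator $d\partial_z^2 - s\partial_z - \gamma$ (whose sign-definite zero-order coefficient $-\gamma$ yields the usual strict inequality at an interior minimum, contradicting $LV \le 0$), and a minimum at a point of $E$ is ruled out by combining \eqref{1st} with a one-sided version of the same maximum principle argument. Symmetric treatments using \eqref{u2}, \eqref{u3}, \eqref{l1}--\eqref{l3} and the other half of \eqref{1st} give $\uphi_i \le T_i(\phi) \le \ophi_i$ for $i=1,2,3$.

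Continuity of $T$ on $\Gamma$ in the locally uniform topology follows from dominated convergence in the Green representation, and relative compactness of $T(\Gamma)$ follows by reading uniform bounds on $T_i(\phi)$ and $(T_i(\phi))'$ off the integral formula and invoking Arzel\`a--Ascoli on an exhaustion of $\R$ by compact intervals. Schauder's fixed point theorem then produces $\triphi \in \Gamma$ with $T(\triphi) = \triphi$, and an ODE bootstrap applied to each linear equation $d\phi_i'' - s\phi_i' - \gamma\phi_i = -H_i(\cdot,\triphi)$ shows $\triphi \in C^2(\R;\R^3)$ and that $\triphi$ solves \eqref{TWS}. The most delicate point of the plan is the invariance step: it is precisely the coupling of the strong maximum principle for $d\partial_z^2 - s\partial_z - \gamma$ with the one-sided jump conditions \eqref{1st} that substitutes for the missing $C^2$ regularity of the generalized upper-lower solutions at the finitely many points of $E$.
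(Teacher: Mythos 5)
Your overall strategy---define a positive-kernel integral operator $T$ from the Green function of $d\partial_z^2-s\partial_z-\gamma$, establish the quasi-monotone structure of $H_1,H_2,H_3$ on the order interval, show $T$ maps the closed convex set $\Gamma$ into itself, and invoke Schauder---coincides with the standard scheme the paper cites from Ma~\cite{M01}, and the monotonicity checks as well as the reduction of invariance to the extremal inequalities such as $T_1(\ophi_1,\uphi_2,\uphi_3)\le\ophi_1$ are correct. You deviate from that reference in two places: (i) you work with the Fr\'echet topology of locally uniform convergence rather than an exponentially weighted Banach space, which is legitimate provided you invoke the Schauder--Tychonoff theorem on a locally convex space rather than the Banach-space Schauder theorem; and (ii) you prove invariance via a maximum principle for $L:=d\partial_z^2-s\partial_z-\gamma$ rather than by the direct integration-by-parts computation in the Green representation.

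Point (ii) is where your sketch has a genuine gap. Your argument excludes a \emph{negative minimum} of $V:=\ophi_1-T_1(\ophi_1,\uphi_2,\uphi_3)$ attained at a finite point, whether or not that point lies in $E$. But $V$ is only known to be bounded: since $\alpha$ need not converge at $+\infty$ and a generic $\phi\in\Gamma$ need not have limits, $T_1(\phi)$ need not converge either, so $\inf_\R V$ may fail to be attained and the contradiction you appeal to never gets triggered. To close this you either need a barrier---for instance minimize $V(z)+\epsilon\cosh(\delta z)$ with $\delta>0$ small enough that $d\delta^2+s\delta<\gamma$, which forces a finite minimizer, then send $\epsilon\to 0$---or, more in keeping with~\cite{M01}, bypass the maximum principle altogether: substitute the differential inequality \eqref{u1} into the Green-kernel formula for $T_1(\ophi_1,\uphi_2,\uphi_3)$ and integrate by parts piecewise across the finitely many points of $E$. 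The corner boundary terms then carry exactly the sign forced by \eqref{1st} (since $\lambda_-<0<\lambda_+$ make the kernel weights positive), and the terms at $\pm\infty$ vanish because $\ophi_1$ and $\ophi_1'$ are bounded, giving $T_1(\ophi_1,\uphi_2,\uphi_3)\le\ophi_1$ pointwise with no information needed about the behavior of $V$ at infinity. That direct computation is the step your proposal replaces by an argument that, as written, does not quite go through.
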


The proof of Proposition~\ref{exist} is based 
on an application of Schauder's fixed point theorem. Its proof is by now standard, thus we safely omit it and refer the reader to, e.g., \cite{M01}.


With Proposition~\ref{exist} and \eqref{rl} in hand, the remaining task is to construct a suitable pair of generalized upper-lower solutions that capture the desired left-hand limit at $z=-\infty$
and satisfy conditions \eqref{order} and \eqref{1st}.
Note that, by the strong maximum principle for scalar equations, any nonnegative bounded solution $\triphi$ of \eqref{TWS} satisfies $\phi_i>0$ in $\R$ if $\phi_i\not\equiv 0$, for any $i\in\{1,2,3\}$.

In particular, in the next three subsections we give three different pairs of generalized upper-lower solutions to address the different types of forced waves according to their left-hand limit at $z=-\infty$.

\subsection{Case $E_u=(1,0,0)$}\label{subsec_Eu}\hspace{\fill} \medskip

In this subsection we construct a pair of generalized upper-lower solutions and prove Theorem~\ref{th:eu}.

First, we assume that $s>s_3^*$ and let
\beaa
A_1(\ld):=d\ld^2-s\ld+r_3(a-1).
\eeaa
Then, due to $r_2(1-h)=r_3(a-1)$, we also have
\beaa
A_1(\ld)=d\ld^2-s\ld+r_2(1-h),
\eeaa
and since $s > s_3^*$ there exist $0<\ld_1<\ld_2<\infty$ such that $A_1(\ld_i)=0$, $i=1,2$.
Note that $A_1(\ld)<0$ for all $\ld\in(\ld_1,\ld_2)$.

Let us now briefly check that \eqref{a1} holds with $\rho = \ld_1$. On the one hand, notice that $\lambda_u = \sqrt{\frac{r_3 (a-1)}{d}} \in (\lambda_1, \lambda_2)$.
Thus, if $\rho \geq \lambda_u$, then without any loss of generality we can reduce~$\rho$ in \eqref{a1} and assume that it holds with $\rho = \ld_1$. On the other hand, if $\rho < \lambda_u$, then
$$s \ge Q_1 (\rho) = d \rho +  \frac{r_3 (a-1)}{\rho},$$
hence
$$A_1 (\rho) = d \rho^2 - s \rho + r_3 (a-1) \leq 0.$$
We find that $\rho \in [\lambda_1, \lambda_2]$, and we conclude as announced that \eqref{a1} holds with~$\rho = \ld_1$.

Then, due to \eqref{r1}, we can choose $\ep$ such that
\be\label{ep1}
 0< \ep< \frac{r_3(a-1)-r_1[k+b(2a-1)-1]}{r_1 },
\ee
and, by a translation, \eqref{a2} holds for this $\ep$ and with $\rho=\ld_1$.

We now define
\be\label{up-lo-1}
\bss
\ophi_1(z)\equiv 1,\; \uphi_1(z)=\max\{1-e^{\ld_1 z}, 0\}, \vspace{3pt} \\
\ophi_2(z)=\min\{e^{\ld_1 z},1\},\; \uphi_2(z)=\max\{e^{\ld_1 z}-q_1e^{\mu_1 z},0\},\vspace{3pt} \\
\ophi_3(z)=(2a-1)\min\{e^{\ld_1 z},1\},\; \uphi_3(z)=\max\{(2a-1)e^{\ld_1 z}-q_2e^{\mu_2 z},0\},
\ess
\ee
where $\mu_i\in(\ld_1,\min\{\ld_2,2\ld_1\})$, $i=1,2$, and
\bea
&&q_1>\max\left\{1,\frac{r_2[\ep+1+b(2a-1)]}{-A_1(\mu_1)}\right\},\label{q1}\\
&&q_2>\max\left\{2a-1,\frac{r_3(2a-1)[\ep+(3a-1)]}{-A_1(\mu_2)}\right\}.\label{q2}
\eea
Then we have the following lemma, whose proof we postpone to the Appendix. 
\begin{lemma}\label{la:eu}
Let the assumptions of Theorem~\ref{th:eu} be enforced. Assume that $s>s_3^*$.
Then the functions $(\ophi_1,\ophi_2,\ophi_3)$ and $(\uphi_1,\uphi_2,\uphi_3)$ defined by \eqref{up-lo-1} are a pair of generalized upper-lower solutions of \eqref{TWS}
such that conditions \eqref{order} and \eqref{1st} hold.
\end{lemma}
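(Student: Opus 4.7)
\textbf{Proof proposal for Lemma~\ref{la:eu}.} My plan is to verify successively the ordering~\eqref{order}, the jump conditions~\eqref{1st}, and then the six differential inequalities~\eqref{u1}--\eqref{l3}. The exceptional set~$E$ where smoothness fails will consist of three points: $z=0$ (where $\uphi_1$, $\ophi_2$, $\ophi_3$ change formula) and two negative points $z_2^*, z_3^*$ defined by $e^{\ld_1 z_i^*}=q_{i-1}e^{\mu_{i-1}z_i^*}$ (where $\uphi_2$ and $\uphi_3$ first hit zero).  Since $q_1>1$ and $q_2>2a-1\geq 1$ by~\eqref{q1}--\eqref{q2}, both $z_2^*$ and $z_3^*$ are negative, which I will use repeatedly.

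For the ordering, $\uphi_1\leq 1=\ophi_1$ is immediate; $\uphi_2\leq e^{\ld_1 z}$ follows from $q_1 e^{\mu_1 z}\geq 0$, and combined with $z_2^*<0$ this gives $\uphi_2\leq\ophi_2$. The analogous argument handles $\uphi_3\leq\ophi_3$. For~\eqref{1st}, a one-sided derivative calculation at each of the three kinks shows that $\ophi_i'$ jumps downward and $\uphi_i'$ jumps upward, the required signs coming from $\ld_1>0$ and $\mu_i>\ld_1$.

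The heart of the argument is the case-by-case verification of the six inequalities. The upper inequality $\mathcal{U}_1\leq 0$ is automatic because $\ophi_1\equiv 1$ reduces it to $r_1[\alpha-k\uphi_2-b\uphi_3]$, which is nonpositive. For $\mathcal{U}_2$ and $\mathcal{U}_3$: on the plateau $z\geq 0$ everything reduces to a sign involving $\alpha\leq 0$ together with $\ophi_2\leq 1$ (and the factor $a-1>0$ from~\eqref{c-s}); on $z<0$ the choice $A_1(\ld_1)=0$ together with the identity $r_2(1-h)=r_3(a-1)$ cancels the linear $e^{\ld_1 z}$ term exactly, and what survives is controlled using $h<1$ and $a>1$. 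The same cancellation via $A_1(\ld_1)=0$ drives $\mathcal{L}_2$ and $\mathcal{L}_3$, with the delicate point that after cancellation the remaining quadratic terms in $e^{2\ld_1 z}$ must be dominated by the correction $-q_i A_1(\mu_i)e^{\mu_i z}$; since $\mu_i<2\ld_1$ by construction and $A_1(\mu_i)<0$, this reduces to the explicit lower bounds on $q_1,q_2$ stated in~\eqref{q1}--\eqref{q2}.

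The step I expect to be the main obstacle is $\mathcal{L}_1\geq 0$ on the region $z<0$ where $\uphi_1=1-e^{\ld_1 z}$. Differentiating and using $d\ld_1^2-s\ld_1=-r_3(a-1)$, the inequality reduces (after bounding $\ophi_2\leq e^{\ld_1 z}$, $\ophi_3\leq(2a-1)e^{\ld_1 z}$ and $\alpha\geq-\ep e^{\ld_1 z}$) to showing
\begin{equation*}
r_3(a-1)\,e^{\ld_1 z}+r_1(1-e^{\ld_1 z})\bigl[1-\ep-k-b(2a-1)\bigr]e^{\ld_1 z}\geq 0.
\end{equation*}
Since the bracket is negative (because $k>1$) and $1-e^{\ld_1 z}\leq 1$, it suffices that $r_3(a-1)\geq r_1[\ep+k+b(2a-1)-1]$, which is precisely the content of~\eqref{r1} combined with the choice of $\ep$ in~\eqref{ep1}. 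This is also the place where the strict inequality~\eqref{r1} is essential, so some slack must be reserved to absorb the $\ep$ term, and this is exactly what the range of $\ep$ in~\eqref{ep1} provides.
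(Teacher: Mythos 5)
Your proposal is correct and follows essentially the same line as the paper's proof: exploit $\mathcal{U}_1\le 0$ trivially, cancel the linear $e^{\lambda_1 z}$ terms in $\mathcal{U}_2,\mathcal{U}_3,\mathcal{L}_1$ via $A_1(\lambda_1)=0$ together with $r_2(1-h)=r_3(a-1)$, control $\mathcal{L}_1$ by exactly the bound $r_3(a-1)\ge r_1[\varepsilon+k+b(2a-1)-1]$ coming from~\eqref{r1} and~\eqref{ep1}, and absorb the quadratic remainders in $\mathcal{L}_2,\mathcal{L}_3$ by the $-q_iA_1(\mu_i)e^{\mu_i z}$ terms using $\mu_i<2\lambda_1$, $A_1(\mu_i)<0$ and the lower bounds in~\eqref{q1}--\eqref{q2}. (One small slip: the zero of $\uphi_3$ is defined by $(2a-1)e^{\lambda_1 z}=q_2e^{\mu_2 z}$, not $e^{\lambda_1 z}=q_2e^{\mu_2 z}$, though you correctly use $q_2>2a-1$ to conclude it is negative.)
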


Next, we assume that $s=s_3^*$. In this case, $A_1(\ld)=0$ has a double root $\ld_u  >0$.
We choose~$\ep$ such that
\be\label{ep2}
 0<\ep< \frac{e\{r_3(a-1)-r_1[k+b(2a-1) -1]\} }{r_1 }
\ee
and, similarly as above and up to a translation, we assume that~\eqref{a2} holds with this~$\ep$ and~$\rho=\ld_u$.

Set $B_0:=\ld_u e$, and define
\beaa
&&\ophi_1(z)\equiv 1,\; \uphi_1(z)=\bss
 1+B_0ze^{\ld_u z},\; z<-1/\ld_u,\\
0,\; z\ge -1/\ld_u,
\ess\\
&&\ophi_2(z)=\bss
 -B_0ze^{\ld_u z},\; z<-1/\ld_u,\\
1,\; z\ge -1/\ld_u,
\ess
\uphi_2(z)=\bss
 -B_0ze^{\ld_u z}-q_3\sqrt{|z|}e^{\ld_u z},\; z<z_3,\\
0,\; z\ge z_3,
\ess\\
&&\ophi_3(z)=\bss
 -(2a-1)B_0ze^{\ld_u z}, \; z<-1/\ld_u,\\
2a-1,\; z\ge -1/\ld_u,
\ess\\
&&\uphi_3(z)=\bss
(2a-1)  \left\{ -B_0z - q_4\sqrt{|z|} \right\} e^{\ld_u z},\; z<z_4,\\
0,\; z\ge z_4,
\ess
\eeaa
where $z_3:=-(q_3/B_0)^2$, $z_4:=-(q_4/B_0)^2$, $q_3>e\sqrt{\ld_u}$, $q_4>(2a-1)e\sqrt{\ld_u}$ and
\bea
&& q_3>4r_2\left(\frac{B_0}{d}\right)\left[\ep\left(\frac{5}{2B_0}\right)^{5/2}+[1+b(2a-1)]B_0\left(\frac{7}{2B_0}\right)^{7/2}\right], \label{q3}\\
&& q_4>4r_3 \left(\frac{B_0}{d}\right)\left[\ep\left(\frac{5}{2B_0}\right)^{5/2}+(a+ 1)B_0\left(\frac{7}{2B_0}\right)^{7/2}\right]. \label{q4}
\eea
Note that $z_i<-1/\ld_u$, $i=3,4$, and $s=s_3^*=2d\ld_u$.
Then we have:
\begin{lemma}\label{la:eue}
Let the assumptions of Theorem~\ref{th:eu} be enforced. Assume that $s=s_3^*$.
Then the functions $(\ophi_1,\ophi_2,\ophi_3)$ and $(\uphi_1,\uphi_2,\uphi_3)$ defined above are a pair of generalized upper-lower solutions of \eqref{TWS}
such that conditions \eqref{order} and \eqref{1st} hold.
\end{lemma}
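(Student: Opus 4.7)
The plan is to verify separately, on each piece of the piecewise definitions, the six differential inequalities $\mathcal{U}_i\le 0$ and $\mathcal{L}_i\ge 0$, together with the order condition \eqref{order} and the one-sided derivative condition \eqref{1st} at the corner points $z=-1/\ld_u$, $z=z_3$, $z=z_4$. The guiding structural identity is that at the critical speed $s=s_3^*=2d\ld_u$, the quadratic $A_1(\ld)$ has $\ld_u$ as a double root, so that both $e^{\ld_u z}$ and $ze^{\ld_u z}$ are annihilated by the operator $L:=d\partial_z^2-s\partial_z+r_3(a-1)$; moreover, by the assumption $r_2(1-h)=r_3(a-1)$, this is simultaneously the linearised operator for the $v$-equation. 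I would also record the key residual identity $L(\sqrt{-z}\,e^{\ld_u z})=-\frac{d\,e^{\ld_u z}}{4(-z)^{3/2}}$ for $z<0$, which is the only term surviving the cancellations below and which ultimately provides the strictly positive quantity needed to absorb nonlinear errors.

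For the order conditions, the observations $B_0z\,e^{\ld_u z}\le 0$ on $z<0$ (so $\uphi_1\le 1\equiv\ophi_1$) and $\ophi_i-\uphi_i=q_j\sqrt{-z}\,e^{\ld_u z}\ge 0$ for $i=2,3$ handle all three inequalities. For the corner conditions at $z=-1/\ld_u$, note that $(ze^{\ld_u z})'=(1+\ld_u z)e^{\ld_u z}$ vanishes at $z=-1/\ld_u$, so $\uphi_1$, $\ophi_2$, $\ophi_3$ are in fact $C^1$ there and \eqref{1st} holds with equality. At $z=z_3$ (and analogously at $z_4$), a direct computation using $\sqrt{-z_3}=q_3/B_0$ yields $\lim_{\xi\nearrow z_3}\uphi_2'(\xi)=-\tfrac{B_0}{2}e^{\ld_u z_3}<0=\lim_{\xi\searrow z_3}\uphi_2'(\xi)$, as required.

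The upper-solution inequalities reduce to transparent sign arguments. Since $L(\ophi_2)=L(\ophi_3)=0$ on $\{z<-1/\ld_u\}$, one rewrites $d\ophi_2''-s\ophi_2'=-r_2(1-h)\ophi_2$ and uses the algebraic identities $1-\uphi_1=\ophi_2$ and $\ophi_3=(2a-1)\ophi_2$ to obtain $\mathcal{U}_2=r_2\ophi_2[\alpha+(h-1)\ophi_2-b\uphi_3]$, which is clearly non-positive, and $\mathcal{U}_3=r_3\ophi_3[\alpha+(a-1)B_0z\,e^{\ld_u z}]\le 0$. The flat pieces $\ophi_1\equiv 1$ and $\ophi_2=1$, $\ophi_3=2a-1$ on $z\ge-1/\ld_u$ are immediate from $\alpha\le 0$ and the non-negativity of the lower solutions.

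The delicate step is the lower inequalities. For $\uphi_1$, the identity $L(1)=r_3(a-1)$ gives $d\uphi_1''-s\uphi_1'=r_3(a-1)\ophi_2$, and combining with $1-\uphi_1=\ophi_2$ and $\ophi_3=(2a-1)\ophi_2$ one obtains
\[\mathcal{L}_1\ge\ophi_2\{r_3(a-1)-r_1[k+b(2a-1)-1]\}-r_1|\alpha|.\]
The bound $\ophi_2\ge e\cdot e^{\ld_u z}$ (valid for $z<-1/\ld_u$ since $B_0|z|\ge e$) together with $|\alpha|\le\ep e^{\ld_u z}$ then yields $\mathcal{L}_1\ge 0$ directly from \eqref{r1} and the choice \eqref{ep2}. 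For $\uphi_2$ and $\uphi_3$ on $\{z<z_{3,4}\}$, the square-root correction is the whole point: $L(\uphi_2)$ leaves the strictly positive residual $\frac{q_3 d\,e^{\ld_u z}}{4(-z)^{3/2}}$, against which I must compare a nonlinear error of the form $r_2\ophi_2(|\alpha|+\ophi_2+b\ophi_3)$, which carries an extra $e^{\ld_u z}$ factor and decays faster. I expect the main obstacle to be the bookkeeping here: one must show that this residual dominates every nonlinear error term uniformly on the whole half-line, and the explicit constants \eqref{q3}--\eqref{q4} arise precisely from elementary calculus optimization of expressions of the form $|z|^{5/2}e^{2\ld_u z}$ and $|z|^{7/2}e^{2\ld_u z}$, with the choices $\tau=5/(2B_0)$ and $\tau=7/(2B_0)$ providing convenient (though not sharp) upper bounds on the maxima.
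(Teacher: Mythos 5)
Your proposal is correct and follows essentially the same strategy as the paper's proof: exploiting the double-root structure of $A_1$ at $s=s_3^*$, the algebraic relations $1-\uphi_1=\ophi_2=\ophi_3/(2a-1)$ on $z<-1/\ld_u$, and the residual $L(\sqrt{-z}\,e^{\ld_u z})=-\tfrac{d}{4}(-z)^{-3/2}e^{\ld_u z}$ to reduce each inequality to a sign check or a domination of nonlinear error terms by this residual. One small slip in the final bookkeeping: the expressions whose maxima produce the constants in \eqref{q3}--\eqref{q4} are of the form $(-z)^{5/2}e^{\ld_u z}$ and $(-z)^{7/2}e^{\ld_u z}$ (a single exponential factor, not $e^{2\ld_u z}$), and the bound $(\gamma/B_0)^\gamma$ is in fact the exact maximum of $(-z)^\gamma e^{\ld_u z}$ over $z<0$, attained at $z=-\gamma/\ld_u$, so it is sharp rather than merely convenient.
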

We again postpone the proof to the Appendix. 
Note that in both cases $s> s_3^*$ and $s = s_3^*$, the generalized upper-lower solutions satisfy $(\ophi_1,\ophi_2,\ophi_3)(-\infty)=(\uphi_1,\uphi_2,\uphi_3)(-\infty)=(1,0,0)$.
Thus Theorem~\ref{th:eu} is proved by combining Lemmas~\ref{la:eu} and \ref{la:eue} with Proposition~\ref{exist} and \eqref{rl}.

\subsection{Case $E^*=(u_p,0,w_p)$}\label{subsec_E*}\hspace{\fill} \medskip

Let us now turn to the proof of Theorem~\ref{th:estar}. As in the previous section, we first consider the case $s>s_2^{**}$.
Set
\beaa
A_2(\ld):=d\ld^2-s\ld+r_2\beta^*.
\eeaa
Then $A_2(\ld)=0$ has two real roots $\ld_i$, $i=3,4$, with $0<\ld_3<\ld_4$ and $A_2(\ld)<0$ for all $\ld\in(\ld_3,\ld_4)$.
Thanks to \eqref{r12}, we can choose $\ep$ such that
\be\label{ep3}
 0<\ep<\min\{(r_2\beta^*/r_1)-[(k-1)+b(2a-1)],(r_2\beta^*/r_3)-1\}  .
\ee
Due to $s \geq  Q_2 (\rho)$, one can check that $\rho \geq \ld_3$, hence up to a translation and without loss of generality we can assume that~\eqref{a2} holds with this $\ep$ and $\rho=\ld_3$.

We construct
\be\label{up-lo-s1}
\bss
\ophi_1(z)=\min\{u_p+bw_pe^{\lambda_3 z},1\},\; \uphi_1(z)=\max\{u_p(1-e^{\lambda_3 z}), 0\}, \vspace{3pt} \\
\ophi_2(z)=\min\{e^{\lambda_3z},1\},\; \uphi_2(z)=\max\{e^{\lambda_3z}-\eta_1e^{\nu_1 z},0\},\vspace{3pt} \\
\ophi_3(z)=\min\{w_p+ B_1e^{\lambda_3 z},2a-1\},\; \uphi_3(z)=\max\{w_p(1-e^{\lambda_3 z}),0\},
\ess
\ee
where $B_1:=2a-1-w_p$, $\nu_1\in(\ld_3,\min\{\ld_4,2\ld_3\})$ and $\eta_1$ satisfies
\be\label{eta1}
\eta_1>\max\left\{1, \frac{ r_2[\ep+1+b(2a-1)]}{-A_2(\nu_1)}\right\}.
\ee
Then we have:
\begin{lemma}\label{la:estar}
Let the assumptions of Theorem~\ref{th:estar} be enforced. Assume that $s>s_2^{**}$.
Then the functions $(\ophi_1,\ophi_2,\ophi_3)$ and $(\uphi_1,\uphi_2,\uphi_3)$ defined by \eqref{up-lo-s1} are a pair of generalized upper-lower solutions of \eqref{TWS}
such that conditions \eqref{order} and \eqref{1st} hold.
\end{lemma}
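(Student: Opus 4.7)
The plan is to verify, in each branch of the piecewise definitions \eqref{up-lo-s1}, the ordering \eqref{order}, the jump conditions \eqref{1st}, and the six differential inequalities \eqref{u1}--\eqref{l3}. Before starting any computation, I would record the three algebraic identities that do all the work,
\[
u_p + b w_p = 1, \qquad a u_p - w_p = 1, \qquad d \ld_3^2 - s \ld_3 = -r_2 \beta^*,
\]
where the last is just $A_2(\ld_3)=0$. The first two express that $E^{*}=(u_p,0,w_p)$ is a stationary state of the spatially homogeneous kinetic system, and together with $A_2(\ld_3)=0$ they cause the leading-order terms in each inequality to cancel. I would fix $\ep$ satisfying \eqref{ep3}, translate so that \eqref{a2} holds with $\rho=\ld_3$ (justified by the reduction argument at the beginning of Section~\ref{sec:main}, together with the fact that $s\ge Q_2(\rho)$ forces $\rho\in[\ld_3,\ld_4]$ when $\rho<\ld^{*}$ and $\rho\ge\ld^{*}\ge\ld_3$ otherwise), and then proceed branch by branch.

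The ordering is a one-line calculation: on $z<0$ the exponential branches give $\ophi_1-\uphi_1=(u_p+bw_p)e^{\ld_3 z}=e^{\ld_3 z}$ and $\ophi_3-\uphi_3=(B_1+w_p)e^{\ld_3 z}=(2a-1)e^{\ld_3 z}$, while $\ophi_2-\uphi_2=\eta_1 e^{\nu_1 z}\ge 0$; on $z\ge 0$ the lower profiles vanish and ordering is automatic. The breakpoints sit at $z=0$ for $\ophi_1,\ophi_2,\ophi_3,\uphi_1,\uphi_3$ (using $u_p+bw_p=1$ and $B_1+w_p=2a-1$) and at $z_2:=-(\ln\eta_1)/(\nu_1-\ld_3)<0$ for $\uphi_2$; at each one the smooth branch meets the constant branch with the correct derivative sign to yield \eqref{1st}.

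For the differential inequalities, the region $z\ge 0$ is immediate: every $\ophi_i$ is constant and every $\uphi_i$ vanishes, so each of \eqref{u1}--\eqref{u3} collapses to $r_i\,\ophi_i\cdot\alpha\le 0$ after using the identities, and each $\mathcal{L}_i$ is zero. On $z<0$, substituting into the upper inequalities and using the three identities produces, after cancellations, the forms
\[
\mathcal{U}_1=-r_2\beta^*bw_p e^{\ld_3 z}+r_1\ophi_1(\alpha-k\uphi_2),\ \ \mathcal{U}_2=r_2 e^{\ld_3 z}\bigl[\alpha-(1-h)u_p e^{\ld_3 z}\bigr],\ \ \mathcal{U}_3=-r_2\beta^*B_1 e^{\ld_3 z}+r_3\ophi_3\,\alpha,
\]
each manifestly $\le 0$ since $\alpha\le 0$. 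Similarly, the lower inequalities for $\uphi_1$ and $\uphi_3$ reduce on $z<0$ to
\[
\mathcal{L}_1\ge u_p e^{\ld_3 z}\bigl\{r_2\beta^*-r_1\bigl[\ep+(k-1)+b(2a-1)\bigr]\bigr\},\qquad \mathcal{L}_3\ge w_p e^{\ld_3 z}\bigl\{r_2\beta^*-r_3(1+\ep)\bigr\},
\]
both nonnegative thanks to \eqref{r12} and the choice of $\ep$ in \eqref{ep3}; the derivation for $\mathcal{L}_3$ uses an extra sub-case split depending on whether $z$ is below or above $z_2$, but in either case one reaches the same lower bound after noting that on $\{\uphi_2>0\}$ one has $\eta_1 e^{\nu_1 z}<e^{\ld_3 z}$.

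The main obstacle is \eqref{l2} for $\uphi_2$, whose reaction is not sign-definite and whose expansion produces several quadratic cross terms. Writing $p=e^{\ld_3 z}$, $q=\eta_1 e^{\nu_1 z}$ on the region $\uphi_2>0$ (so $0<q<p$) and using the three identities plus $A_2(\ld_3)=0$, the $\beta^*$-contributions in $d\uphi_2''-s\uphi_2'$ cancel against $r_2(p-q)\beta^*$ from the reaction, leaving
\[
\mathcal{L}_2=-A_2(\nu_1)\,q+r_2(p-q)\alpha+r_2\bigl\{q\bigl[(p-q)+Mp\bigr]-Mp^{2}\bigr\},
\]
where $M:=1+b(2a-1)-b(1-h)w_p\le 1+b(2a-1)$. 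The bracketed $q[\cdots]$ is nonnegative, and $(p-q)\alpha\ge -\ep p^{2}$, hence
\[
\mathcal{L}_2\ge -A_2(\nu_1)\eta_1 e^{\nu_1 z}-r_2(\ep+M)e^{2\ld_3 z}.
\]
The decisive point is the choice $\nu_1\in(\ld_3,2\ld_3)$: on $z\le 0$ one has $e^{2\ld_3 z}\le e^{\nu_1 z}$, so the right-hand side is bounded below by $e^{\nu_1 z}\bigl[\eta_1(-A_2(\nu_1))-r_2(\ep+M)\bigr]$, and this is nonnegative precisely by \eqref{eta1}. On the complementary region $\uphi_2=0$, \eqref{l2} is trivial. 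The full bookkeeping is routine but lengthy and I would defer it, together with the analogous verifications for the upper inequalities, to the Appendix as already announced in the text.
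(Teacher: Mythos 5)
Your proposal is correct and follows essentially the same route as the paper's proof: translate so that \eqref{a2} holds with $\rho=\ld_3$, exploit the identities $u_p+bw_p=1$, $au_p-w_p=1$ and $A_2(\ld_3)=0$ to collapse the leading terms, bound $\mathcal{L}_1,\mathcal{L}_3$ via \eqref{ep3}/\eqref{r12}, and handle $\mathcal{L}_2$ by isolating $-\eta_1 A_2(\nu_1)e^{\nu_1 z}$ and absorbing the quadratic remainder through $\nu_1<2\ld_3$ and \eqref{eta1}. Two small remarks: the choice $\nu_1\in(\ld_3,\min\{\ld_4,2\ld_3\})$ is what guarantees both $A_2(\nu_1)<0$ and $e^{2\ld_3 z}\le e^{\nu_1 z}$, so you should cite the full constraint rather than only $\nu_1<2\ld_3$; and the ``extra sub-case split'' you mention for $\mathcal{L}_3$ is unnecessary, since $a\underline{\phi}_2\ge 0$ can simply be dropped in either branch, exactly as in the paper.
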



Next, we assume that $s=s_2^{**}$.
In this case, $A_2(\ld)=0$ has a double root $\lambda^* = \sqrt{r_2 \beta^* /d}>0$.
We choose $\ep$ such that
\be\label{ep4}
 0<\ep<e\min\{(r_2\beta^*/r_1)-[(k-1)+b(2a-1)],(r_2\beta^*/r_3)-1\}
\ee
and, similarly as before, we can assume that~\eqref{a2} holds with this $\ep$ and $\rho=\ld^*$.

We define
\beaa
&&\ophi_1(z)=\bss
 u_p-b w_pB_2ze^{\ld^* z}, \; z<-1/\ld^*,\\
1,\; z\ge -1/\ld^*,
\ess
\uphi_1(z)=\bss
 u_p[1+B_2ze^{\ld^* z}], \; z<-1/\ld^*,\\
0,\; z\ge -1/\ld^*,
\ess \\
&&\ophi_2(z)=\bss
 -B_2ze^{\ld^* z}, \; z<-1/\ld^*,\\
1,\; z\ge -1/\ld^*,
\ess
\uphi_2(z)=\bss
 -B_2ze^{\ld^* z}-\eta_2\sqrt{|z|}e^{\ld^* z}, \; z<z_6,\\
0,\; z\ge z_6,
\ess \\
&&\ophi_3(z)=\bss
 w_p- B_1 B_2ze^{\ld^*  z}, \; z<-1/\ld^*,\\
2a-1,\; z\ge -1/\ld^*,
\ess
\uphi_3(z)=\bss
 w_p[1+B_2ze^{\ld^* z}], \; z<-1/\ld^*,\\
0,\; z\ge -1/\ld^*,
\ess
\eeaa
where $B_1=2a-1-w_p$, $B_2:=\ld^* e$, $z_6:=-(\eta_2/B_2)^2$ and $\eta_2$ satisfies
\be\label{eta2}
\eta_2>\max\left\{e\sqrt{\ld^*}, 4r_2\frac{B_2}{d}\left[\ep\left(\frac{5}{2B_2}\right)^{5/2}+\{1+b(2a-1)\}B_2\left(\frac{7}{2B_2}\right)^{7/2}\right]  \right\}.
\ee
Note that $z_6<-1/\ld^*$ and $s=s_2^{**}=2d\ld^*$. Then we have:
\begin{lemma}\label{la:estar2}
Let the assumptions of Theorem~\ref{th:estar} be enforced. Assume that $s=s_2^{**}$.
Then the functions $(\ophi_1,\ophi_2,\ophi_3)$ and $(\uphi_1,\uphi_2,\uphi_3)$ defined above are a pair of generalized upper-lower solutions of \eqref{TWS}
such that conditions \eqref{order} and \eqref{1st} hold.
\end{lemma}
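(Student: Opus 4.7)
The plan is to verify in order the ordering \eqref{order}, the one-sided derivative jumps \eqref{1st}, and the six differential inequalities \eqref{u1}--\eqref{l3}. The architecture parallels the critical case of Lemma~\ref{la:eue} for~$E_u$, with $u_p$, $w_p$ and $\beta^*$ now playing the roles that $1$, $0$ and $1-h$ did there, so I would plan to carry out the two computations in tandem in the appendix.

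First I would record the two algebraic identities that make the ansatz match up at the gluing point $z = -1/\ld^*$: namely $u_p + bw_p = 1$, which follows from the explicit formulas for $u_p, w_p$, and the extremum relation $-B_2 z e^{\ld^* z} \le 1$ for $z \in \R$ with equality at $z = -1/\ld^*$, which holds because $B_2 = \ld^* e$. From these it follows immediately that each smooth branch of $\ophi_i, \uphi_i$ (for $i=1,3$) meets the constant extension continuously at $z = -1/\ld^*$, and that \eqref{order} holds: on the region $z < -1/\ld^*$ the differences $\ophi_i - \uphi_i$ reduce to $-(u_p + bw_p)B_2 z e^{\ld^* z}$, $\eta_2 \sqrt{|z|}\, e^{\ld^* z}$ and $-(w_p + B_1)B_2 z e^{\ld^* z}$, all manifestly nonnegative, while on $z \ge -1/\ld^*$ the upper solutions are the constants $1$, $1$, $2a-1$ which dominate the lower solutions by the same extremum bound. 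The auxiliary condition $\eta_2 > e\sqrt{\ld^*}$ in \eqref{eta2} ensures $z_6 \le -1/\ld^*$, making the piecewise definition of $\uphi_2$ compatible with that of $\ophi_2$.

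Next, \eqref{1st} follows for all but one junction from the observation that $(z e^{\ld^* z})' = (1 + \ld^* z) e^{\ld^* z}$ vanishes at $z = -1/\ld^*$, so that both one-sided derivatives of $\ophi_1, \uphi_1, \ophi_2, \ophi_3, \uphi_3$ at $z = -1/\ld^*$ equal zero and the inequality is automatic. At the remaining junction $z = z_6$ for $\uphi_2$, the right derivative is zero, and a short computation using $z_6 = -(\eta_2/B_2)^2$ yields $\uphi_2'(z_6^-) = -\tfrac{B_2}{2} e^{\ld^* z_6} < 0$, confirming \eqref{1st}.

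The substantive work, which I would present in the appendix alongside the analogous computation for Lemma~\ref{la:eue}, is the verification of \eqref{u1}--\eqref{l3}. On the region $z \ge -1/\ld^*$, where each $\ophi_i$ is constant, $\mathcal{U}_i$ reduces to $r_i \ophi_i \alpha(z)$ plus a nonpositive remainder (using $a \cdot 1 + a \cdot 1 - (2a-1) = 1$ to neutralize the non-$\alpha$ part of the third equation), so $\mathcal{U}_i \le 0$ follows from $\alpha \le 0$; on regions where a given lower solution vanishes, $\mathcal{L}_i \ge 0$ is trivial. The delicate region is $z < -1/\ld^*$ (respectively $z < z_6$ for $\mathcal{L}_2$). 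There the substitution of the ansatz combined with the double-root identities $A_2(\ld^*) = A_2'(\ld^*) = 0$ annihilates all contributions proportional to $z e^{\ld^* z}$, so that the linear part of the operator applied to $\uphi_2$ reduces to the favorable term $\tfrac{d\eta_2}{4} |z|^{-3/2} e^{\ld^* z}$. This must dominate the contribution of the perturbation $\alpha$, controlled via \eqref{a2} with $\rho = \ld^*$, together with quadratic cross-terms of the form $|z|^{5/2} e^{2\ld^* z}$ and $|z|^{7/2} e^{2\ld^* z}$ arising from the products $\uphi_j \ophi_k$. The main obstacle is precisely this uniform-in-$z$ dominance: it is what dictates the explicit lower bound \eqref{eta2} via the suprema $\sup_{z<0} |z|^{n/2} e^{\ld^* z} \propto (n/(2B_2))^{n/2}$ for $n = 5, 7$, while the strict inequality in the choice of $\ep$ from \eqref{ep4} provides the slack required, with condition \eqref{r12} ensuring that a valid choice of $\ep$ is possible.
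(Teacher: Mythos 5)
Your proposal is correct and follows essentially the same route as the paper: verify \eqref{order} and \eqref{1st} (which the paper dismisses as clear), then check \eqref{u1}--\eqref{l3} on the nontrivial ranges, using the double-root relations $A_2(\ld^*)=A_2'(\ld^*)=0$ (i.e.\ $s=2d\ld^*$), the algebraic identities $u_p+bw_p=1$ and $-1+au_p-w_p=0$, the decay hypothesis \eqref{a2} with $\rho=\ld^*$, and the choices \eqref{ep4}, \eqref{eta2} to absorb the perturbation. Two small wording slips, neither affecting the argument: after cancellation it is the linear operator \emph{plus} the term $r_2\beta^*\uphi_2$ coming from $1-hu_p-bw_p=\beta^*$ that collapses to $\tfrac{d\eta_2}{4}|z|^{-3/2}e^{\ld^* z}$, not the linear operator alone; and the cross-terms, after factoring out $\tfrac{1}{4}|z|^{-3/2}e^{\ld^* z}$ as in the paper, are of the form $|z|^{5/2}e^{\ld^* z}$ and $|z|^{7/2}e^{\ld^* z}$ (not $e^{2\ld^* z}$), which is exactly what makes the suprema $\sup_{z<0}|z|^{n/2}e^{\ld^* z}=(n/(2B_2))^{n/2}$ appear in \eqref{eta2}.
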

The proofs of Lemma~\ref{la:estar} and~\ref{la:estar2} shall be dealt with in the Appendix. 
In either cases we have $(\ophi_1,\ophi_2,\ophi_3)(-\infty)=(\uphi_1,\uphi_2,\uphi_3)(-\infty)=(u_p,0,w_p)$,
so that Theorem~\ref{th:estar} immediately follows by combining Lemmas~\ref{la:estar} and \ref{la:estar2} with Proposition~\ref{exist} and \eqref{rl}.

\subsection{Case $E_*=(0,v_p,w_p)$}\label{sec:es}\hspace{\fill} \medskip

We now turn to last case of a forced wave satisfying $\triphi(-\infty)=E_*$, and throughout this subsection we fix $s >0$.
In particular we will discuss the positivity (or not) of such a forced wave at the end of this section.

Unlike~$E_u$ and~$E^*$, the steady state~$E_*$ is stable with respect to the underlying ODE system (i.e. without diffusion) without climate shift (i.e. $\alpha \equiv 0$).
Thus we use here a different approach by comparison with traveling waves of some scalar equations.
Our first step is to prepare the following proposition (see~\cite{bg19,hz17} for similar result under a monotonicity assumption on~$\alpha$).
\begin{proposition}\label{scalar}
Let $\widehat{\alpha}$ be a negative bounded continuous function such that there exist positive constants $C$, $\rho$ and $K$ with
\be\label{ha1}
 \widehat{\alpha}(z)\ge -Ce^{\rho z},\;\forall z\le -K;\quad \widehat{\alpha}(z)<-1,\;\forall z \geq K.
\ee
Given positive constants $d,s,r$ and {$0 < \gamma <- \limsup_{z \to +\infty} \widehat{\alpha} (z)$}, then there is a solution $\phi$ to
\bea
&&d\phi''(z)-s\phi'(z)+r\phi(z)[\gamma+\widehat{\alpha}(z)-\phi(z)]=0,\; z\in\R,\label{1d-eq}\\
&&\phi(-\infty)=\gamma,\quad \phi(\infty)=0,\label{1d-bc}
\eea
such that $0\le\phi\le \gamma$ in $\R$.
Moreover, there is a positive constant $\ld_0\in(0,\rho)$ such that
$$\phi(z)\ge \gamma(1-e^{\ld_0 z}),$$ for all $z<0$. 
\end{proposition}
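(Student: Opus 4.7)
The plan is to deploy the upper--lower solution method analogous to Proposition~\ref{exist}, using a sub-solution whose profile directly encodes the desired exponential lower bound, and then to handle the right-hand limit by a maximum-principle comparison on the half-line.

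First I would exploit the standing translation convention of Section~\ref{sec:main}: by replacing $\widehat\alpha$ with $\widehat\alpha(\cdot - M)$ for $M$ sufficiently large, one may assume $\widehat\alpha(z) \geq -\epsilon e^{\rho z}$ for all $z < 0$ with $\epsilon \in (0, \gamma)$. Next, pick an exponent $\lambda_0 \in (0, \min\{\rho, s/d\})$ and propose
$$\overline\phi(z) \equiv \gamma, \qquad \underline\phi(z) := \max\{\gamma(1 - e^{\lambda_0 z}),\, 0\}.$$
The super-solution inequality reduces to $r\gamma\widehat\alpha \leq 0$, which is immediate. For the sub-solution, on $\{z < 0\}$ a direct calculation with $y := e^{\lambda_0 z} \in (0,1)$ gives
$$\mathcal L(\underline\phi) = \gamma y \lambda_0(s - d\lambda_0) + r\gamma(1-y)\bigl[\widehat\alpha(z) + \gamma y\bigr];$$
the first summand is positive, and using $\widehat\alpha(z) \geq -\epsilon y^{\rho/\lambda_0}$ together with $\epsilon < \gamma$, the bracket satisfies $\widehat\alpha(z) + \gamma y \geq y(\gamma - \epsilon) > 0$, whence $\mathcal L(\underline\phi) > 0$. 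The generalized sub-solution condition at the corner $z = 0$ is automatic since $\underline\phi'(0^+) = 0 \geq -\gamma\lambda_0 = \underline\phi'(0^-)$.

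A scalar analog of Proposition~\ref{exist}---constructing solutions of \eqref{1d-eq} on truncated intervals $[-L,L]$ with Dirichlet data $\gamma$ and $0$ via Schauder, then passing $L\to\infty$ with standard elliptic estimates---produces a solution $\phi$ on $\R$ satisfying $\underline\phi \leq \phi \leq \gamma$. The sandwich yields $\phi(-\infty) = \gamma$ and the advertised lower bound $\phi(z) \geq \gamma(1 - e^{\lambda_0 z})$ for $z < 0$ directly from $\phi \geq \underline\phi$ (possibly with a slight adjustment of $\lambda_0$ within $(0,\rho)$ when the translation is undone).

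The main obstacle---since no monotonicity on $\widehat\alpha$ is assumed, unlike in \cite{bg19,hz17}---is to show $\phi(+\infty) = 0$. For this I would extract $K_0 > 0$ and $\delta > 0$ from the hypothesis $\gamma < -\limsup_{z\to+\infty}\widehat\alpha$ so that $\widehat\alpha(z) \leq -\gamma - \delta$ on $[K_0,\infty)$; plugging into \eqref{1d-eq} then gives $d\phi'' - s\phi' - r\delta\phi \geq 0$ on $[K_0,\infty)$. I would next take the exponential super-solution $\Psi(z) := A e^{\lambda_- z}$, with $\lambda_- < 0$ the negative root of $d\lambda^2 - s\lambda - r\delta = 0$ and $A$ chosen so that $\Psi(K_0) \geq \gamma$, and compare $\phi$ with $\Psi$ on the half-line via a Phragm\'en--Lindel\"of-type maximum principle for the linear operator $d\partial_z^2 - s\partial_z - r\delta$: its strictly negative zeroth-order term rules out any positive supremum of the bounded function $\phi - \Psi$, using a blow-down argument if such a supremum is only approached at infinity. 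This yields $\phi(z) \leq A e^{\lambda_- z} \to 0$ as $z \to \infty$.
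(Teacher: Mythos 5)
Your proof is correct and follows essentially the same route as the paper's: super-solution $\overline\phi\equiv\gamma$, sub-solution $\underline\phi=\gamma\max\{1-e^{\lambda_0 z},0\}$ with $\lambda_0\in(0,\min\{\rho,s/d\})$, followed by a standard construction of a sandwiched solution and a maximum-principle argument at $+\infty$ (the paper concludes this last step by citing \cite[Proposition 2.2]{cg22}; the half-line comparison with $Ae^{\lambda_- z}$ that you spell out is the substance behind that citation). The only cosmetic divergence is in verifying the sub-solution inequality: you show the reaction bracket $\widehat\alpha+\gamma e^{\lambda_0 z}$ is itself nonnegative using $\varepsilon<\gamma$, whereas the paper drops the positive part of the bracket and lets the drift--diffusion term $-\gamma e^{\lambda_0 z}(d\lambda_0^2-s\lambda_0)$ absorb the $-r\varepsilon e^{\rho z}$ contribution under the weaker smallness condition $d\lambda_0^2-s\lambda_0+r\varepsilon<0$; both work after the standing translation makes $\varepsilon$ arbitrarily small.
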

\begin{proof}
To construct a solution of \eqref{1d-eq}-\eqref{1d-bc}, we use the monotone iteration method together with a pair of super-sub-solutions.
On one hand it is clear that $\ophi(z)\equiv \gamma  $ is a super-solution.

For the sub-solution, we first choose a positive constant $\ld_0<\rho$ small enough such that $d\ld_0^2-s\ld_0<0$.
Then we choose $\ep>0$ small enough such that
\be\label{ep}
d\ld_0^2-s\ld_0+r\ep<0.
\ee
Up to a translation, $\widehat{\alpha}$ satisfies \eqref{a2} with this chosen $\ep$.

Let $\uphi(z)=\gamma\max\{1-e^{\ld_0 z},0\}$. We compute, for $z < 0$,
\beaa
&&d\uphi''(z)-s\uphi'(z)+r\uphi(z)[\gamma+\widehat{\alpha}(z)-\uphi(z)]\\
&\ge& -\gamma e^{\ld_0 z}(d\ld_0^2-s\ld_0)+r\gamma (1-e^{\ld_0 z})[\gamma-\ep e^{\rho z}-\gamma+\gamma e^{\ld_0 z}]\\
&\ge& -\gamma e^{\ld_0 z}\{d\ld_0^2-s\ld_0 +r\ep e^{(\rho-\ld_0)z}\}\ge 0,
\eeaa
due to $\ld_0<\rho$ and \eqref{ep}.
Hence $\uphi$ is a sub-solution of \eqref{1d-eq}.

It follows from the monotone iteration method that a solution $\phi$ of \eqref{1d-eq} exists such that $\phi(-\infty)=\gamma$ and $\uphi\le\phi\le\ophi$ in $\R$.
The fact that $\phi(\infty)=0$ follows from the negativity of the reaction term $\gamma+\widehat{\alpha}(z)$ at $\infty$, by the same proof as that of \cite[Proposition 2.2]{cg22}.
Thus the proof of Proposition~\ref{scalar} is done.
\end{proof}

Next, we remark that any bounded nonnegative solution $\triphi$ of \eqref{TWS} must satisfy
$$
\begin{array}{l}
\underline{\phi}_1:= 0\le\phi_1\le \overline{\phi}_1 := 1, \vspace{3pt}\\
 0\le\phi_2 \le \overline{\phi}_2 := 1, \vspace{3pt}\\
 0\le\phi_3 \le \overline{\phi}_3 := 2a -1,
\end{array}
$$
in $\R$. This simply follows from the maximum principle for scalar equations, noticing for instance that
$$d \phi_1 '' - s \phi_1 ' + r_1 \phi_1 \left[ 1 - \phi_1 \right] \geq 0.$$


It follows from assumption~\eqref{co-b22} that $b(2a-1) <1-h$, and then from Proposition~\ref{scalar} that there is a solution $\phi=\uphi_2$ to
\be\label{co-2}
\bss
 d \phi''(z)-s\phi'(z)+r_2\phi(z)[\gamma_2+\alpha(z)-\phi(z)]=0,\; z\in\R.\\
\phi(-\infty)=\gamma_2:=1-h-b(2a-1)>0,\; \phi(\infty)=0,
\ess
\ee
such that $0\le\uphi_2\le\gamma_2<1$ and $\uphi_2 (z)\ge \gamma_2 (1-e^{\ld_0 z})$ for all $z<0$ for some constant $\lambda_0\in(0,\rho)$. 

Using again assumption~\eqref{co-b22} and Proposition~\ref{scalar} (with $\gamma = \gamma_3$ defined below, $\widehat{\alpha} = \alpha + a ( \uphi_2 - \gamma_2)$
 and the constant $\rho$ in \eqref{ha1} is replaced by $\ld_0$), we have a solution $\phi=\uphi_3$ to
\be\label{co-l3}
\bss
 d \phi''(z)-s\phi'(z)+r_3\phi(z)[-1+\alpha(z)+a\uphi_2(z)-\phi(z)]=0,\; z\in\R,\\
\phi(-\infty)= \gamma_3 : = -1+a\gamma_2  > 0,\;\phi(\infty)=0,
\ess
\ee
such that $0\le\uphi_3\le\gamma_3<a-1<2a-1$.


Then one can check that $(\ophi_1,\ophi_2,\ophi_3)$ and $(\uphi_1,\uphi_2,\uphi_3)$ are continuously differentiable such that \eqref{u1}-\eqref{l3} hold for all $z\in\R$.
Hence they are a pair of generalized upper-lower solutions of \eqref{TWS} such that \eqref{order} and \eqref{1st} hold.
It follows from Proposition~\ref{exist} that a solution $\triphi$ of \eqref{TWS} exists such that $\uphi_i\le\phi_i\le\ophi_i$, $i=1,2,3$.
Also, $\triphi(\infty)=(0,0,0)$, by \eqref{rl}.

{To derive $\triphi(-\infty)=E_*$, we use the method of contracting rectangles. We refer the reader to \cite[section 4.3.1]{CGG21} for the details.} This proves Theorem~\ref{th:es}.\\

We conclude this section by discussing the positivity of the forced wave $(\phi_1, \phi_2, \phi_3)$ connecting $(0,v_p,w_p)$ and $(0,0,0)$.
The positivity of the second and third component immediately follows from the strong maximum principle, yet it remains an open question whether $\phi_1  >0$.
Actually, one may check that $\phi_1 \equiv 0$ in some parameter range.

Indeed, consider $(\phi_1,\phi_2, \phi_3)$ constructed above. First, we have that $\phi_2 \geq \underline{\phi}_2$ and $\phi_3 \geq \underline{\phi}_3$, hence
$$ d \phi_1 '' - s \phi_1 ' + r_1 \phi_1 (1 + \alpha - \phi_1 - k \underline{\phi}_2 - b \underline{\phi}_3 ) \geq 0.$$
Furthermore, by construction the functions $\underline{\phi}_2$ and $\underline{\phi}_3$ are positive and independent of the parameter~$k$. They also have positive limits at $-\infty$,
while $\limsup_{z\to +\infty} \alpha(z)  < - 1$. Therefore, one can find $k $ large enough such that
$$1 + \alpha - \phi_1 - k \underline{\phi}_2 - b \underline{\phi}_3 < 0$$
for all $z \in \R$. Thus, the function $\phi_1$ cannot admit a positive maximum. Since $\phi_1 (\pm \infty) = 0$, we conclude that $\phi_1 \equiv 0$. As announced, the first component of the forced wave may not be positive.

{Notice that we carried out the above argument only for the forced wave that we constructed, which a priori may not be the unique forced wave connecting $(0,v_p,w_p)$ and $(0,0,0)$.
However, in Proposition~\ref{scalar} it is possible to choose $\phi$ as the minimal positive and bounded solution of \eqref{1d-eq}-\eqref{1d-bc}, which is stable from below.
Thus, one may check by comparison arguments with scalar equations that any forced wave $(\phi_1,\phi_2 ,\phi_3)$ still satisfies $\phi_2 \geq \underline{\phi}_2$ and $\phi_3 \geq \underline{\phi}_3$,
hence the above argument still applies.}


\begin{remark}\label{rk2bis}
{We point out that a similar argument can be made for forced waves $(\phi_1, \phi_2,\phi_3)$ whose invading state is $E_v =(0,1,0)$.
Indeed $\underline{\phi}_2$ above still acts as a subsolution for the $\phi_2$-equation, and putting this into the first equation, one can find $k$ large enough so that $1 +\alpha - k \uphi_2 < 0$, hence $\phi_1 \equiv 0$.}
\end{remark}

\bigskip
\section{Appendix: verifications of upper-lower solutions}\label{appendix}

In this section, we prove Lemmas~\ref{la:eu} to~\ref{la:estar2}. Since conditions \eqref{order} and \eqref{1st} clearly hold, we only verify conditions \eqref{u1}-\eqref{l3} for each case.
Moreover, since these conditions are also trivial in the subdomains where the generalized upper-lower solutions are constant,
it suffices to check \eqref{u1}-\eqref{l3} in the ranges where the generalized upper-lower solutions are non-constant.

\begin{proof}[Proof of Lemma~\ref{la:eu}]
It is clear that $\mU_1(z)\le 0$ for all $z\in\R$.

Next, for $z<0$ we compute
\beaa
\mU_2(z)\le e^{\ld_1 z}(d\ld_1^2-s\ld_1)+r_2e^{\ld_1 z}\{1-h+h\eldo-\eldo\}=r_2\eldo(h-1)\eldo\le 0,
\eeaa
using $\alpha<0$, $\uphi_3\ge 0$, $h<1$ and $A_1(\ld_1)=0$.

For $z<0$, we also have
\beaa
\mU_3(z) & \le & (2a-1)\eldo(d\ld_1^2-s\ld_1)+r_3(2a-1)\eldo\{-1+a+a\eldo-(2a-1)\eldo\}\\
&=&r_3(2a-1)\eldo(1-a)\eldo\le 0,
\eeaa
using $\alpha<0$, $A_1(\ld_1)=0$ and $a>1$.

Now we turn to the lower solutions. For $z<0$, we compute
\beaa
\mL_1(z)&\ge& -\eldo(d\ld_1^2- s \ld_1)+r_1(1-\eldo)\{-\ep\eldo+\eldo-k\eldo-b(2a-1)\eldo\}\\
&\ge&  \eldo\{ r_3(a-1) -r_1[\ep+k+b(2a-1)-1 ]\}\ge 0,
\eeaa
using~\eqref{a2} with $\rho = \lambda_1$ (see the discussion in Subsection~\ref{subsec_Eu}), $A_1(\ld_1)=0$ and \eqref{ep1}.

For $\uphi_2$, there is $z_1<0$ (due to $q_1>1$) such that $\uphi_2(z)=\eldo-q_1e^{\mu_1 z}$ for $z<z_1$ and $\uphi_2(z)=0$ for $z>z_1$.
Then we compute, for $z<z_1$,
\beaa
\mL_2(z)&\ge&\eldo(d\ld_1^2-s\ld_1)-q_1e^{\mu_1 z}(d\mu_1^2-s\mu_1)\\
&&+r_2(\eldo-q_1e^{\mu_1 z})\{ 1-h -\ep\eldo-(\eldo-q_1e^{\mu_1 z})-b(2a-1)\eldo\}\\
&\ge&e^{\mu_1 z}\{-q_1A_1(\mu_1)-r_2e^{(2\ld_1-\mu_1)z}[\ep+1+b(2a-1)]\}\ge 0,
\eeaa
where we used \eqref{a2} with $\rho = \lambda_1$, $A_1(\ld_1)=0$, $A_1(\mu_1)<0$, $\mu_1<2\ld_1$ and \eqref{q1}.

Lastly, there is $z_2<0$ (due to $q_2 > 2a-1$) such that $\uphi_3(z)=(2a-1)\eldo-q_2e^{\mu_2 z}$ for $z<z_2$ and $\uphi_3(z)=0$ for $z>z_2$.
Similarly, we have $\mL_3(z)\ge 0$ for $z<z_2$, by using the choice of $q_2$ in \eqref{q2}. This proves Lemma~\ref{la:eu}.
\end{proof}
\medskip
\begin{proof}[Proof of Lemma~\ref{la:eue}]
For $z<z_u:=-1/\ld_u$, we compute
\beaa
\mU_2(z)&\le& - B_0 z  \elds(d\ld_u^2-s\ld_u)-B_0\elds(2d\ld_u-s)\\
&&- r_2B_0 z \elds\{1-h- hB_0 z \elds + B_0 z \elds\} \\
& = &  -r_2B_ 0 z \elds\{- h B_0 z \elds + B_0 z \elds\} \le 0 ,
\eeaa
using $\alpha<0$, $s=2d\ld_u$, $\uphi_3\ge 0$, $h<1$ and $A_1(\ld_u)=0$.
We point out that $s=2d\ld_u$ comes from the fact that $\lambda_u$ is a double root of $A_1 (\lambda) = 0$, and this shall be used again below.

For the last upper solution, for $z<z_u$ we also have
 \beaa
\mU_3(z)\le - (2a-1)B_0 z \elds\{(d\ld_u^2-s\ld_u)+r_3[-1+a - (1-a)B_0 z \elds]\}\le0,
\eeaa
using $\alpha<0$, $s=2d\ld_u$, $a>1$ and $A_1(\ld_u)=0$.

Next, for $z<z_u$ we compute
 \beaa
\mL_1(z)& \ge & - B_0 z e^{\lambda_u z} (-d \lambda_u^2 + s \lambda_u) - B_0 e^{\lambda_u z} (-2d \lambda_u + s) - r_1 (1 + B_0 z e^{\lambda_u z}) \varepsilon e^{\lambda_u z} \\
& & - r_1 (1 + B_0 z e^{\lambda_u z} ) \{ 1 - k - b (2a -1 )\} B_0 z e^{\lambda_u z}   \\
&\ge& -B_0z\elds\{r_3(a-1)-r_1[\ep(-z B_0)^{-1} +k+b(2a-1) -1]\}\\
&  \ge & - B_0 z \elds\{r_3(a-1)-r_1[\ep/e+k+b(2a-1) -1 ]\}\ge 0,
\eeaa
using \eqref{a2} with $\rho = \ld_u$, $s=2d\ld_u$, $k>1$, $A_1(\ld_u)=0$, $B_0 =\lambda_u e = -e/z_u$ and \eqref{ep2}.

Moreover, for $z<z_3$, we have
 \beaa
\mL_2(z)&\ge&  \frac{1}{4}  (-z)^{-3/2} dq_3\elds \\
&&  - r_2 (- B_0 z - q_3 \sqrt{|z|})  \{ \varepsilon -  B_0  z - q_3 \sqrt{|z|} - b (2a -1)B_0  z  \} e^{2 \lambda_u z}\\
&\ge&  \frac{1}{4} (-z)^{-3/2} dq_3\elds + r_2B_0 z  e^{2 \lambda_u z} \{\ep - [1+b(2a-1)]B_0 z \}\\
&=&\frac{1}{4}(-z)^{-3/2}\elds\left\{dq_3- 4r_2B_0  [(-z)^{5/2}\ep  \elds+ (-z)^{7/2} B_0(1+b(2a-1))\elds]\right\},
\eeaa
using \eqref{a2} with $\rho = \lambda_u$, $s=2d\ld_u$, $h<1$ and $A_1(\ld_u)=0$.
Recall the fact that, for a given constant $\gamma>0$,
\beaa
(-z)^\gamma \elds\le  \left( \frac{\gamma}{e \lambda_u} \right)^\gamma = (\gamma/B_0)^\gamma,\quad \forall\, z<0.
\eeaa
It then follows from \eqref{q3} that $\mL_2(z)\ge 0$ for all $z<z_3 < 0$.

Finally, a similar calculation also leads $\mL_3(z)\ge 0$ for all $z<z_4$, using \eqref{q4}. The lemma is thus proved.
\end{proof}

\medskip
\begin{proof}[Proof of Lemma~\ref{la:estar}]
For $z<0$, we calculate
\beaa
\mU_1(z)\le bw_p\eldt(d\ld_3^2-s\ld_3)+r_1(u_p+bw_p\eldt)\{1-u_p-bw_p\eldt-bw_p+bw_p\eldt\}\le 0,
\eeaa
using $\alpha<0$, $\uphi_2\ge 0$, $1-u_p-bw_p=0$ and $A_2(\ld_3)=0$. 

Also for $z<0$, we compute
\beaa
\mU_2(z)\le \eldt(d\ld_3^2-s\ld_3)+r_2\eldt\{1-hu_p+hu_p\eldt-\eldt-bw_p+bw_p\eldt\}\le 0,
\eeaa
using $\alpha<0$, $A_2(\ld_3)=0$ and $-1+hu_p+bw_p=-\beta^*<0$, and
\beaa
\mU_3(z)\le B_1\eldt(d\ld_3^2-s\ld_3)+r_3\ophi_3(z)\{-1+au_p+abw_p\eldt+a\eldt-w_p-B_1\eldt\} \le 0,
\eeaa
using $A_2(\ld_3)=0$, $-1+au_p-w_p=0$ and the fact that $ab w_p + a - B_1 = 0$ by the definitions of $B_1$ and $w_p$.

Next, we compute, for $z<0$,
\beaa
\mL_1(z)&\ge& -u_p\eldt(d\ld_3^2-s\ld_3)+r_1u_p(1-\eldt)\{-\ep\eldt+u_p\eldt-k\eldt-bB_1\eldt\}\\
&=&-u_p\eldt(d\ld_3^2-s\ld_3)+r_1u_p(1-\eldt)\{-\ep\eldt-(k-1)\eldt-b(2a-1)\eldt\}\\
&\ge&u_p\eldt\{r_2\beta^*-r_1[\ep+(k-1)+b(2a-1)]\}\ge 0,
\eeaa
using \eqref{a2} with $\rho = \lambda_3$ (see the discussion in Subsection~\ref{subsec_E*}), $u_p+bw_p=1$, $A_2(\ld_3)=0$ and \eqref{ep3}.

Now, due to $\eta_1>1$, there is $z_5<0$ such that $\uphi_2(z)=\eldt-\eta_1 e^{\nu_1 z}$ for $z<z_5$ and $\uphi_2(z)=0$ for $z>z_5$.
Then we have, for $z<z_5<0$,
\beaa
\mL_2(z)&\ge&\eldt(d\ld_3^2-s\ld_3)-\eta_1 e^{\nu_1 z}(d\nu_1^2-s\nu_1)\\
&&+r_2(\eldt-\eta_1 e^{\nu_1 z})\{\beta^*-\ep\eldt-hbw_p\eldt-\eldt+\eta_1 e^{\nu_1 z}-bB_1\eldt\}\\
&\ge& e^{\nu_1 z}\{-\eta_1 A_2(\nu_1)-r_2[\ep+1+b(2a-1)]e^{(2\ld_3-\nu_1)z}\} \\
&& + r_2 \eta_1 e^{\nu_1 z} \{\ep\eldt + hbw_p\eldt +  \underline{\phi}_2 (z) + bB_1\eldt\}\\
&\ge& e^{\nu_1 z}\{-\eta_1 A_2(\nu_1)-r_2[\ep+1+b(2a-1)]e^{(2\ld_3-\nu_1)z}\}\ge 0
\eeaa
using $A_2(\ld_3)=0$, $B_1=2a-1-w_p$, $h<1$, $A_2(\nu_1)<0$, $\nu_1<2\ld_3$ and \eqref{eta1}.

Finally, for $z<0$, we compute in a similar way that
\beaa
\mL_3(z)\ge -w_p\eldt\{d\ld_3^2-s\ld_3+r_3(\ep+1)\} = - w_p e^{\lambda_3 z} \{ -r_2 \beta^* + r_3 (\varepsilon +1) \} \ge 0,
\eeaa
using \eqref{a2} with $\rho = \lambda_3$, $-1+au_p-w_p=0$, \eqref{ep3} and $A_2(\ld_3)=0$.
This completes the proof of the lemma.
\end{proof}

\medskip
\begin{proof}[Proof of Lemma~\ref{la:estar2}]
Set $z^*:=-1/\ld^*$ for notational convenience.
For $z<z^*$, we have
 \beaa
\mU_1(z)&\le&- bw_pB_2 z \eldss(d(\ld^*)^2-s\ld^*)-bw_pB_2\eldss(2d\ld^*-s)\\
&&+r_1\ophi_1(z)\{1-u_p+ bw_pB_2 z \eldss-bw_p - bw_pB_2 z \eldss\}\le 0,
\eeaa
using $\alpha<0$, $\uphi_2\ge 0$, $s = 2d\ld^*$, $1-u_p-bw_p=0$ and $A_2(\ld^*)=0$. 

Next, for $z<z^*$ we compute
 \beaa
\mU_2(z)\le - r_2B_2 z \eldss \{\beta^*B_2 z \eldss\} \le 0,
\eeaa
using $\alpha<0$, $s=2d\ld^*$, $A_2(\ld^*)=0$ and $\beta^*=1-hu_p-bw_p>0$, as well as
 \beaa
\mU_3(z)\le - B_1B_2 z \eldss\{d (\ld^*)^2-s\ld^*\}\le 0,
\eeaa
using $\alpha<0$, $s=2d\ld^*$, $-1+au_p-w_p=0$, $A_2(\ld^*)=0$ 
and $B_1 = 2a - 1 - w_p =a (bw_p + 1)$ due to $w_p= (a-1)/(1+ab)$.

For the lower solutions, the computations proceed similarly as in the proof of Lemma~\ref{la:eue}, so that we only sketch these. First, we have for $z < z^*$ that
\beaa
 \mL_1(z)\ge - u_pB_2 z \eldss(r_2\beta^*)-r_1u_p\{\ep\eldss- [(k-1)+b(2a-1)] B_2 z\eldss\}\ge 0,
\eeaa
using \eqref{a2} with $\rho = \ld^*$, $s=2d\ld^*$, $A_2(\ld^*)=0$, $B_2 z = \lambda^* e z \leq -e $ for $z < z^* = - 1/\lambda^*$, 
\eqref{r12} and \eqref{ep4}.
Then, for $z<z_6$,
 \beaa
\mL_2(z) &\ge & \frac{1}{4}(-z)^{-3/2}\eldss\{d\eta_2-4r_2B_2[(-z)^{5/2}\ep \eldss+(1+hb w_p + b B_1) \times (-z)^{7/2} B_2\eldss ] \}\\
&\ge&  \frac{1}{4}(-z)^{-3/2}\eldss\{d\eta_2-4r_2B_2[(-z)^{5/2}\ep \eldss+(1+ b (2a-1)) \times (-z)^{7/2} B_2\eldss ] \}\ge 0,
\eeaa
using \eqref{a2} with $\rho = \ld^*$, $s=2d\ld^*$, $A_2(\ld^*)=0$, $h <1$ and \eqref{eta2}.
Finally, $\mL_3(z)\ge 0$ for $z<z^*$ follows in the same manner as before (using \eqref{r12} and \eqref{ep4}).
This concludes the proof of Lemma~\ref{la:estar2}.
\end{proof}


\begin{thebibliography}{99}


\bibitem{br08}
H. Berestycki, L. Rossi, {\it Reaction-diffusion equations for population dynamics with forced speed, I - the case of the whole space}, Discrete Contin. Dyn. Syst., 21 (2008), 41-67.

\bibitem{br09}
H. Berestycki, L. Rossi, {\it Reaction-diffusion equations for population dynamics with forced speed, II - cylindrical type domains}, Discrete Contin. Dyn. Syst., 25 (2009), 19-61.

\bibitem{bf18}
H. Berestycki, J. Fang,
{\it Forced waves of the Fisher-KPP equation in a shifting environment},
J. Differential Equations, 264 (2018), 2157-2183.


\bibitem{bg19}
J. Bouhours, T. Giletti,
{\it Spreading and vanishing for a monostable reaction-diffusion equation with forced speed}, J. Dynam. Differential Equations, 31 (2019), 247-286.

\bibitem{BP18}
Z. Bi and S. Pan,
{\it Dynamics of a predator-prey system with three species}, {Bound. Value. Probl.} (2018) 2018:162.



\bibitem{CGG21}
Y.-S. Chen, T. Giletti and J.-S. Guo,
{\it Persistence of preys in a diffusive three species predator-prey system with a pair of strong-weak competing preys}, J. Differential Equations 281 (2021), 341-378.

\bibitem{CG21}
Y.-S. Chen and J.-S. Guo,
{\it Traveling wave solutions for a three-species predator-prey model with two aborigine preys},
Japan J. Industrial and Applied Mathematics 38 (2021), 455-471. 

\bibitem{CGY}
Y.-Y. Chen, J.-S. Guo and C.-H. Yao,
{\it Traveling wave solutions for a continuous and discrete diffusive predator-prey model}, {J. Math. Anal. Appl.} {445} (2017), 212-239.

\bibitem{choi21}
W. Choi, T. Giletti, J.-S. Guo,
{\it Persistence of species in a predator-prey system with climate change and either nonlocal or local dispersal},
J. Differential Equations, 302 (2021), 807-853.

\bibitem{cg22}
W. Choi, J.-S. Guo,
{\it Forced waves of a three species predator-prey system in a shifting environment}, J. Math. Anal. Appl., 514 (2022), 126283. 


\bibitem{co1}
J. Coville,
{\it Can a population survive in a shifting environment using non-local dispersion}, arXiv:2012.09441.

\bibitem{ch20}
J. Coville, F. Hamel,
{\it On generalized principal eigenvalues of nonlocal operators with a drift}, Nonlinear Analysis, 193 (2020), Art. 111569, 20 pp.




\bibitem{dsz20}
P. De Leenheer, W. Shen, A. Zhang,
{\it Persistence and extinction of nonlocal dispersal evolution equations in moving habitats}, Nonlinear Analysis: Real World Appl., 54 (2020), Art. 103110, 33 pp.

\bibitem{dll21}
F.-D. Dong, B. Li, W.-T. Li,
{\it Forced waves in a Lotka-Volterra competition-diffusion model with a shifting habitat}, J. Differential Equations, 276 (2021), 433-459.

\bibitem{DX12}
Y. Du and R. Xu,
{\it Traveling wave solutions in a three-species food-chain model with diffusion and delays}, {Int. J. Biomath.} 5 (2012), 1250002, 17 pp.

\bibitem{flw16}
J. Fang, Y. Lou, J. Wu,
{\it Can pathogen spread keep pace with its host invasion?}, SIAM J. Appl. Math., 76 (2016), 1633-1657.


\bibitem{GNOW20}
J.-S. Guo, K.-I. Nakamura, T. Ogiwara and C.-C. Wu,
{\it Traveling wave solutions for a predator-prey system with two predators and one prey},
{Nonlinear Analysis: Real World Applications} {54} (2020), 103111, 13 pp.

\bibitem{hz17}
H. Hu, X. Zou,
{\it Existence of an extinction wave in the Fisher equation with a shifting habitat}, Proc. Amer. Math. Soc., 145 (2017), 4763-4771.

\bibitem{HLR03}
J. Huang, G. Lu and S. Ruan,
{\it Existence of traveling wave solutions in a diffusive predator-prey model}, J. Math. Biol. 46 (2003), 132-152.

\bibitem{H12}
W. Huang,
{\it Traveling wave solutions for a class of predator-prey systems}, J. Dynam. Differential Equations 24 (2012), 633-644.

\bibitem{HL14}
Y.L. Huang and G. Lin,
{\it Traveling wave solutions in a diffusive system with two preys and one predator}, {J. Math. Anal. Appl.} {418} (2014), 163-184.

\bibitem{HZ03}
J. Huang and X. Zou,
{\it Existence of traveling wave fronts of delayed reaction-diffusion systems without monotonicity}, {Disc. Cont. Dyn. Systems} 9 (2003), 925-936.

\bibitem{LLR06}
 W.T. Li, G. Lin and S. Ruan,
 {\it Existence of traveling wave solutions in delayed reaction-diffusion systems with applications to diffusion-competition systems},
{ Nonlinearity} 19 (2006), 1253-1273.

\bibitem{lwz18} W.-T. Li, J.-B. Wang, X.-Q. Zhao,
{\it Spatial dynamics of a nonlocal dispersal population model in a shifting environment}, {J. Nonlinear Sci.}, 28 (2018), 1189-1219.

\bibitem{L14}
 G. Lin, {\it Invasion traveling wave solutions of a predator-prey system}, {Nonlinear Anal.} 96 (2014), 47-58.

\bibitem{LLM10}
 G. Lin, W.T. Li and M. Ma,
{\it Traveling wave solutions in delayed reaction diffusion systems with applications to multi-species models},
{ Disc. Cont. Dyn. Systems, Ser. B} 13 (2010), 393-414.

\bibitem{LR14}
 G. Lin and S. Ruan,
{\it Traveling wave solutions for delayed reaction-diffusion systems and applications to diffusive Lotka-Volterra competition models with distributed delays},
{ J. Dyn. Diff. Equat.}, 26 (2014), 583-605.

\bibitem{L15}
J.-J. Lin, W. Wang, C. Zhao and T.-H. Yang,
{\it Global dynamics and traveling wave solutions of two predators-one prey models},
{Discrete and Continuous Dynamical System, Series B}, {20} (2015), 1135-1154.

\bibitem{L18}
J.-J. Lin and T.-H. Yang,
{\it Traveling wave solutions for a diffusive three-species intraguild predation model}, {Int. J. Biomath.} {11} (2018), 1850022, 27 pp.

\bibitem{M01}
S. Ma,
{\it Traveling wavefronts for delayed reaction-diffusion systems via a fixed point theorem},
{J. Differential Equations} {171} (2001), 294-314.

\bibitem{ww21} J.-B. Wang, C. Wu,
{\it Forced waves and gap formations for a Lotka-Volterra competition model with nonlocal dispersal and shifting habitats}, Nonlinear Analysis: Real World Applications, 58 (2021), 103208.

\bibitem{wz19}
J.-B. Wang, X.-Q. Zhao,
{\it Uniqueness and global stability of forced waves in a shifting environment}, Proc. Amer. Math. Soc., 147 (2019), 1467-1481.


\bibitem{WZ01}
J. Wu and X. Zou,
{\it Traveling wave fronts of reaction-diffusion systems with delay},
{J. Dynam. Differential Equations} 13 (2001), 651-687.

\bibitem{ywl19}
Y. Yang, C. Wu, Z. Li, {\it Forced waves and their asymptotics in a Lotka-Volterra cooperative model under climate change}, Appl. Math. Comput., 353 (2019), 254-264.



\bibitem{Z17}
T. Zhang,
{\it Minimal wave speed for a class of non-cooperative reaction-diffusion systems of three equations},
{J. Differential Equations} 262 (2017), 4724-4770.

\bibitem{ZJ17}
T. Zhang and Y. Jin,
{\it Traveling waves for a reaction-diffusion-advection predator-prey model},
{Nonlinear Analysis: Real World Applications} 36 (2017), 203-232.

\end{thebibliography}
\end{document}